\numberwithin{equation}{section}
\tikzset{cross/.style={cross out, draw=black, minimum size=2*(#1-\pgflinewidth), inner sep=0pt, outer sep=0pt},
cross/.default={4.5pt}}
\DeclareMathOperator{\SL}{SL}
\DeclareMathOperator{\spinc}{Spin^c}
\DeclareMathOperator{\tb}{tb}
\DeclareMathOperator{\TB}{TB}
\DeclareMathOperator{\rot}{rot}
\DeclareMathOperator{\slk}{sl}
\DeclareMathOperator{\HFK}{HFK}
\DeclareMathOperator{\HFL}{HFL}
\DeclareMathOperator{\CFL}{CFL}
\DeclareMathOperator{\HFKhat}{\widehat{HFK}}
\DeclareMathOperator{\HFLhat}{\widehat{HFL}}
\DeclareMathOperator{\SFH}{SFH}
\DeclareMathOperator{\HFhat}{\widehat{HF}}
\DeclareMathOperator{\CF}{CF}
\DeclareMathOperator{\HF}{HF}
\DeclareMathOperator{\Leg}{\mathfrak{L}}
\DeclareMathOperator{\Leghat}{\widehat{\mathfrak{L}}}
\DeclareMathOperator{\EH}{EH}
\DeclareMathOperator{\EHdirect}{\underrightarrow{\EH}}
\DeclareMathOperator{\EHinverse}{\underleftarrow {\EH}}
\DeclareMathOperator{\Ker}{Ker }
\DeclareMathOperator{\lk}{\ell k}
\DeclareMathOperator{\rk}{rk}
\renewcommand{\geq}{\geqslant}
\renewcommand{\leq}{\leqslant} 
\renewcommand{\epsilon}{\varepsilon}
\newcommand{\N}{\mathbb{N}}
\newcommand{\Z}{\mathbb{Z}}
\newcommand{\Q}{\mathbb{Q}}
\newcommand{\F}{\mathbb{F}}
\DeclareFontFamily{U}{mathx}{\hyphenchar\font45}
\DeclareFontShape{U}{mathx}{m}{n}{
      <5> <6> <7> <8> <9> <10>
      <10.95> <12> <14.4> <17.28> <20.74> <24.88>
      mathx10
      }{}
\DeclareSymbolFont{mathx}{U}{mathx}{m}{n}
\DeclareMathAccent{\widecheck}{0}{mathx}{"71}
\DeclareMathAccent{\wideparen}{0}{mathx}{"75}
\newtheorem{thm}{Theorem}[section]
\newtheorem*{thm*}{Theorem}
\newtheorem{lemma}[thm]{Lemma}
\newtheorem{prop}[thm]{Proposition}
\newtheorem*{prop*}{Proposition}
\newtheorem{cor}[thm]{Corollary}
\newtheorem{conj}[thm]{Conjecture}
\theoremstyle{definition}
\newtheorem{rmk}[thm]{Remark}
\newtheorem*{rmk*}{Remark}
\newtheorem{convention}[thm]{Convention}
\xpatchcmd{\@thm}{\thm@headpunct{.}}{\thm@headpunct{}}{}{}
\pgfplotsset{compat=1.13}
\begin{document}
\title{Legendrian invariants and half Giroux torsion}
\author{Alberto Cavallo and Irena Matkovi\v{c}\\ \\
 \footnotesize{Institute of Mathematics of the Polish Academy of Sciences (IMPAN),}\\ 
 \footnotesize{Warsaw 00-656, Poland}\\ \\
 \footnotesize{Uppsala Universitet,}\\
 \footnotesize{Uppsala 751 06, Sweden}\\ \\  \small{acavallo@impan.pl}\hspace{2cm}\small{irena.matkovic@math.uu.se}}
\date{}

\maketitle
\begin{abstract}
We collect some observations about Legendrian links with non-vanishing contact invariants, mostly concerning the non-loose realizations of links and the addition of boundary-parallel half Giroux torsion.

In particular, we show that every null-homologous link with irreducible complement admits a non-loose Legendrian realization with non-zero (at least) invariant $\EH$ in $\SFH$, in some overtwisted contact structure (determined by its gradings); for many links these come from Gabai's work, for others the existence follows from the sutured interpretation of link Floer invariants. 

We reveal that separating half Giroux torsion does not necessary cause Legendrian invariants to vanish. Furthermore, we propose a conjectural characterization of links with non-vanishing $\Leghat$ in $\HFLhat$ among links with non-zero $\Leg$ in $c\HFL^-$.
 
 \bigskip
 \small{2020 {\em Mathematics Subject Classification}: 57K33, 57K18.}
\end{abstract}

\section{Introduction}
The Legendrian and transverse knot invariants have been defined in various versions of Heegaard Floer theory; in particular, in sutured Floer homology \cite{HKM.eh}, and every (minus, hat and plus) version of knot Floer homology \cite{LOSSz}. Various constructions (via convex decompositions, Heegaard diagrams, braid representation, or grids) have been proved to be equivalent \cite{SV, G.i, EVVZ, BVVV}. 

On the other hand, it has also been observed that the information from different invariants is not the same. Indeed, there are knots with non-zero $\EH$-invariant in $\SFH$ but vanishing $\Leg$ in $\HFK^-$; for example, the non-loose unknots \cite{EVVZ}. There are knots with non-zero $\Leg$ in $\HFK^-$, but zero $\Leghat$ in $\HFKhat$; as, for example, all Legendrian knots in $(S^3,\xi_\text{std})$ which are positive stabilizations \cite{LOSSz}. There are knots with non-zero $\Leghat$ in $\HFKhat$, but zero $\Leg^+$ in $\HFK^+$; in particular, the bindings of open books supporting overtwisted contact manifolds, by combining \cite{VV} and \cite{M.s}. From the sutured interpretation \cite{SV, G.i,EVVZ}, it is clear though that non-zero $\Leg^+$ implies non-zero $\Leghat$ implies non-zero $\Leg$ implies non-zero $\EH$. Our motivation is to understand how different is the contact topological behavior of Legendrian links with non-zero invariant $\Leg^+$ or $\Leghat$ compared to the ones with non-zero only $\Leg$. 

\bigskip
We know that all overtwisted structures admit non-loose links; for example, the binding of any supporting open book is necessary non-loose (in fact, its complement is universally tight by \cite{ET}). The existence of a non-loose realization for every null-homologous knot type with irreducible complement was, on the other hand, formally proved by Etnyre \cite[Theorem 1.8]{E}; although there is a missing assumption in the statement of his theorem about the knot complement being irreducible, which is needed for it to be taut (now expanded in \cite{CEMM}). The same argument works for links as well. However, the 3-dimensional invariants of structures, which support non-loose realizations of a particular link, have not been identified in many cases. 
\medskip

\noindent\emph{Unknots.} In \cite{EF} Eliashberg and Fraser showed that there was a unique overtwisted structure on $S^3$ containing non-loose unknots; moreover, they completely classified them. 
\begin{convention}
 We normalize the 3-dimensional invariant $d_3$ using the convention of Ozsv\'ath and Szab\'o in \cite{OSz.c}. Hence, we assign $d_3$ equal to zero for structures inducing 2-plane fields homotopic to the Hopf fibration, and have $d_3(\xi)=1$ when $\xi$ is the structure that contains the non-loose unknots.
\end{convention}
\noindent\emph{Hopf links.}
 The Legendrian representatives of the Hopf links were completely classified by Geiges and Onaran in \cite{GO}; they live in overtwisted structures with $d_3=0,1,2$.
\medskip

\noindent\emph{Torus knots.}
 Non-loose torus knots were eventually completely classified by Etnyre, Min and Mukherjee in \cite{EMM}. Even so, we have no closed expression for overtwisted structures inhabited by non-loose $T_{p,q}$, only bounds on the values of $d_3$ where they appear \cite{M.n,EMM}.
\medskip

\noindent\emph{Fibered links.}
From the work of Etnyre and Vela-Vick \cite{VV,EVV} it follows that every fibered link in $S^3$ has a transverse representative with non-vanishing $\widehat\Leg$ in the contact structure it supports; namely, as the binding of the corresponding open book. The 3-dimensional invariant $d_3$ of the structure $\xi$ where such representative lives can be computed using link Floer homology: say $M_{\text{top}}$ and $A_{\text{top}}=g_3+n-1$ are the gradings of the unique top Alexander grading non-zero element of $\widehat\HFL$ of the mirror image of the link; then $d_3(\xi)=2A_{\text{top}}-M_{\text{top}}+1-n$, where $n$ is the number of components of the link, see Lemma \ref{lemma:tower}. 

If the link is not strongly quasi-positive (in other words, if it does not support the tight structure) then it is non-loose itself. In either case, we have two infinite families of non-loose realizations (by Etnyre \cite[Theorem 1.7]{E}), living in $d_3=2g_3-M_{\text{top}}+n-1$ and $d_3=1-M_{\text{top}}$; where for strongly quasi-positive links $M_{\text{top}}=2g_3+n-1$.

\subsection*{Non-loose realizations from taut foliations}
Etnyre in \cite[Theorem 1.8]{E} observes that taking the (perturbation of) taut foliation on a knot complement and gluing to it Giroux torsion layers gives two infinite families of transverse non-loose representatives with the self-linking number equal to $\pm(2g_3-1)$.
We first look at what additional Heegaard Floer invariants tell us about this construction. 

\begin{thm}
 \label{thm:taut}
 If a contact structure $\xi_L$ on $(Y(L),\Gamma_\infty)$ arises from a taut sutured manifold hierarchy, then it is the $\infty$-completion of a Legendrian link $\mathcal L$ with non-zero $\Leghat(\mathcal L)\in\HFLhat(-Y,L)$ and its Alexander grading equals $A(\Leghat(\mathcal L))=\frac{n+||L||_T}{2}$, where $||L||_T$ is the Thurston norm of $L$.
\end{thm}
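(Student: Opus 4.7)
The plan is to combine three ingredients: the Honda-Kazez-Matič non-vanishing theorem for $\EH$, the sutured interpretation of $\Leghat$, and the detection of the Thurston norm via the relative Euler class on Seifert surfaces.

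First I would construct the $\infty$-completion: since $\Gamma_\infty$ consists of two meridional sutures on each boundary torus of $Y(L)$, one can glue in standard Legendrian tubular neighborhoods $S^1\times D^2$ to extend $\xi_L$ to a contact structure $\xi$ on $Y$ whose cores form a Legendrian realization $\mathcal L$ of $L$, with $\xi|_{Y(L)}=\xi_L$. Then, by the Honda-Kazez-Matič theorem (in its balanced sutured form), the contact class $\EH(\xi_L)\in\SFH(-Y(L),-\Gamma_\infty)$ is non-zero whenever $\xi_L$ arises from a taut sutured hierarchy. The sutured interpretation of the Legendrian invariant from \cite{SV, G.i, EVVZ} provides an isomorphism $\SFH(-Y(L),-\Gamma_\infty)\cong\HFLhat(-Y,L)$ carrying $\EH(\xi_L)$ to $\Leghat(\mathcal L)$, giving the desired non-vanishing.

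For the Alexander grading, I would exploit the fact that the sutured isomorphism respects the decomposition by relative $\spinc$-structures: for a Seifert surface $\Sigma$ of $L$ the Alexander grading of $\Leghat(\mathcal L)$ should satisfy a Chern-class type formula of the form $2A-n=\langle e(\xi_L),[\Sigma,\partial\Sigma]\rangle$. The taut hierarchy furnishes a norm-realizing $\Sigma$ with $-\chi(\Sigma)=\|L\|_T$; tracking the relative Euler class of the contact structure through the convex decompositions in the hierarchy—each step contributing $-\chi$ of the decomposing surface to the Euler pairing—the final evaluation gives $\langle e(\xi_L),[\Sigma,\partial\Sigma]\rangle=-\chi(\Sigma)=\|L\|_T$. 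This yields $A(\Leghat(\mathcal L))=\tfrac{n+\|L\|_T}{2}$.

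The main obstacle will be the bookkeeping of relative Euler classes through the hierarchy: one must verify that a convex decomposition along a taut surface contributes exactly $-\chi$ to the Euler-class pairing, so that the cumulative value is precisely $\|L\|_T$ (and not just bounded above by it). Modulo this technicality, the argument reduces to the fact that $\HFLhat$ detects the Thurston norm, together with the Bennequin-type saturation for contact structures coming from taut hierarchies.
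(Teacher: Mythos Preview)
Your high-level strategy matches the paper's: non-vanishing of $\EH$ via Honda--Kazez--Mati\'c plus the Stipsicz--V\'ertesi identification $\SFH(-Y(L),-\Gamma_\infty)\cong\HFLhat(-Y,L)$. However, your extraction of the Legendrian link $\mathcal L$ does not work as written. You propose to glue ``standard Legendrian tubular neighborhoods'' to $(Y(L),\Gamma_\infty)$, but a standard neighborhood of a Legendrian knot has dividing slope equal to an integer (the Thurston--Bennequin number), not $\infty$; there is no tight solid torus with two meridional dividing curves whose core is Legendrian in the standard sense. In fact $(Y(L),\Gamma_\infty,\xi_L)$ is the complement of a \emph{transverse} realization of $L$, not directly a Legendrian one. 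What you must produce is a Legendrian $\mathcal L$ whose $\infty$-completion (standard complement $\cup$ negative basic slice $\zeta^-_{\tb,\infty}$) equals $\xi_L$. The paper does this concretely: the Thurston-norm-minimizing Seifert surface in $(Y(L),\Gamma_\infty)$ carries a single boundary-parallel dividing arc per boundary component, and the associated bypass peels off a basic slice with slopes $\infty$ and $0$. The remaining piece is a genuine Legendrian complement $(Y(L),\Gamma_0)$, and by construction $\xi_L$ is its $\infty$-completion, so $\Leghat(\mathcal L)=\EH(\xi_L)$.

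For the Alexander grading, your plan to track the relative Euler class through every step of the hierarchy is sound in principle but is exactly the ``main obstacle'' you flag, and it is unnecessary. The paper's argument is a single observation: the Seifert surface $\Sigma$ is a leaf of the taut foliation being perturbed, so after the bypass it sits in $(Y(L),\Gamma_0)$ with \emph{no} dividing curves. This immediately gives $\tb(\mathcal L)-\rot(\mathcal L)=-\chi(\Sigma)=\|L\|_T$, and then Equation~\eqref{eq:Alexander} yields $A(\Leghat(\mathcal L))=\tfrac{n+\|L\|_T}{2}$. So rather than summing contributions along the hierarchy, you should use that the norm-realizing surface is itself a leaf and read off the classical invariants directly.
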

Theorem \ref{thm:taut} follows from the work of Honda, Kazez and Mati\'c, as detailed in Section \ref{ss:taut}; Tovstopyat-Nelip informed us that in an ongoing joint work with Hedden, they also observe this result.

We can now express the 3-dimensional invariant of overtwisted structures supporting the two families of non-loose representatives in terms of Alexander and Maslov grading of the Legendrian invariant $\Leghat(\mathcal L)$, see Subsections \ref{subsection:AM} and \ref{subsection:Gtor}.

\begin{prop}
 \label{prop:taut}
 Let $L$ be a null-homologous $n$-component link, different from the unknot, in a rational homology $3$-sphere $Y$, whose complement $Y\backslash L$ is irreducible. Then there exists a Legendrian realization $\mathcal L$ as in Theorem \ref{thm:taut}. 
 Furthermore, there are (infinitely many) transverse non-loose realizations of $L$ with self-linking number $\slk$ equal to $||L||_T$ in $d_3(\xi)=1+||L||_T-M(\Leghat(\mathcal L))$, and equal to $-||L||_T$ in $d_3(\xi')=1-M(\Leghat(\mathcal L))$.
\end{prop}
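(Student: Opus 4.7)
The plan is to produce $\mathcal L$ by invoking Gabai's sutured manifold theory together with Theorem \ref{thm:taut}, to construct the two transverse families via Etnyre's half Giroux torsion extension \cite[Theorem 1.8]{E}, and to extract the $d_3$ invariants from the Alexander and Maslov gradings of $\Leghat(\mathcal L)$ using the formulas developed in Subsections \ref{subsection:AM} and \ref{subsection:Gtor}.

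First I would verify that $(Y(L),\Gamma_\infty)$ is a taut sutured manifold. Since $L$ is null-homologous, the meridional sutures $\Gamma_\infty$ represent a nontrivial class in the boundary that is Seifert-capped; and since $Y\setminus L$ is irreducible and $L$ is not the unknot, the exterior contains no essential sphere or disk. By Gabai's theorem it therefore admits a taut sutured manifold hierarchy, to which the construction of Honda--Kazez--Mati\'c associates a tight contact structure $\xi_L$ on $(Y(L),\Gamma_\infty)$. Applying Theorem \ref{thm:taut} to $\xi_L$ produces the Legendrian $\mathcal L$ in the $\infty$-completion, with $\Leghat(\mathcal L)\neq 0$ and $A(\Leghat(\mathcal L))=(n+||L||_T)/2$.

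To produce the two transverse families I would follow \cite[Theorem 1.8]{E}: extend the tight $\xi_L$ to overtwisted contact structures $\xi$ and $\xi'$ on $Y$ by gluing into a standard neighborhood of $L$ a half Giroux torsion layer of each sign, and allow additional full Giroux torsion layers along the same collar to generate infinitely many such extensions. Since the exterior remains tight in every case, the transverse push-offs of $L$ are non-loose. The two sign choices determine opposite boundary slopes on the collar torus, which with respect to the Seifert framing give transverse self-linking numbers $+||L||_T$ and $-||L||_T$ respectively.

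The $d_3$ computation then uses the identity $d_3=2A(\Leghat)-M(\Leghat)+1-n$ set up in Subsection \ref{subsection:AM} for overtwisted structures carrying a non-vanishing $\Leghat$. Substituting $A(\Leghat(\mathcal L))=(n+||L||_T)/2$ yields $d_3(\xi)=1+||L||_T-M(\Leghat(\mathcal L))$ for the $+||L||_T$ family; the opposite sign of half Giroux torsion shifts the relative Euler contribution by $-||L||_T$ as computed in Subsection \ref{subsection:Gtor}, producing $d_3(\xi')=1-M(\Leghat(\mathcal L))$. The main obstacle is the bookkeeping of this Euler-class shift between the two sign choices of the half Giroux torsion layer; once Subsection \ref{subsection:Gtor} establishes it, the rest of the argument is a direct substitution.
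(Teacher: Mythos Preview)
Your proposal is correct and follows essentially the same approach as the paper: verify tautness to invoke Theorem~\ref{thm:taut}, use Etnyre's $k\pi$-torsion families, and compute $d_3$ from Equations~\eqref{eq:d3} and~\eqref{eq:Alexander} together with the Lutz-twist shift of Proposition~\ref{prop:Hopf}. The only imprecision is that Equation~\eqref{eq:d3} applies to the ambient structure $\xi_0$ in which $\mathcal L$ itself lives (the $k=0$ closure, possibly tight), and the paper then argues $d_3(\xi)=d_3(\xi_0)$ for even $k$ and $d_3(\xi')=d_3(\xi_0)-\lVert L\rVert_T$ for odd $k$; your phrasing applies the grading identity directly ``to $\xi$'', which is harmless once this identification is made explicit.
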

In the 3-sphere depending on the $\HFL$-\emph{thickness} of a link, that is, one less than the number of lines in the $(M,A)$-plane where $\widehat{\HFL}$ is supported (see, for example, \cite{SSz}), and the link invariants $\tau$ and $\tau^*$ defined in \cite{Cavallo} we can greatly limit what the value of $d_3$ is.
\begin{cor}
 \label{cor:d3}
 Suppose that the link $L$ as before has thickness equal to $\emph{th}(L)$; then the two overtwisted structures on $S^3$ satisfy 
 \[\left|d_3(\xi)+\tau^*(L)+\dfrac{n-||L||_T}{2}-1\right|\leq\emph{th}(L)\hspace{0.5cm}\text{ and }\hspace{0.5cm}\left|d_3(\xi)+\tau^*(L)+\dfrac{n+||L||_T}{2}-1\right|\leq\emph{th}(L)\]
 In particular, if $L$ is $\HFL$-thin, which means $\emph{th}(L)=0$, then we have $d_3(\xi)=\frac{1+||L||_T+\sigma(L)}{2}$ and $d_3(\xi')=\frac{1-||L||_T+\sigma(L)}{2}$.    
\end{cor}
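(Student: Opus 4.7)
My plan is to reduce both displayed inequalities to a single bound on the diagonal grading $A-M$ of $\Leghat(\mathcal L)$, and then apply the definitions of $\HFL$-thickness and of Cavallo's invariant $\tau^*$. First I would substitute the formulas from Proposition \ref{prop:taut}, $d_3(\xi) = 1 + ||L||_T - M(\Leghat(\mathcal L))$ and $d_3(\xi') = 1 - M(\Leghat(\mathcal L))$, together with the Alexander grading $A(\Leghat(\mathcal L)) = (n+||L||_T)/2$ coming from Theorem \ref{thm:taut}. A direct algebraic manipulation reveals that both inequalities of the corollary collapse to the single statement
$$\bigl|A(\Leghat(\mathcal L)) - M(\Leghat(\mathcal L)) + \tau^*(L)\bigr| \leq \text{th}(L),$$
where the second inequality in the statement is to be read with $d_3(\xi')$ in place of the displayed $d_3(\xi)$, consistently with Proposition \ref{prop:taut}.

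Next, I would interpret the right-hand side geometrically: the thickness $\text{th}(L)$ is by definition one less than the number of distinct $\delta$-lines on which $\HFLhat(-L)$ is supported, and the invariant $\tau^*(L)$ from \cite{Cavallo} singles out a canonical $\delta$-value associated to the top Alexander grading tower generator. Since $\Leghat(\mathcal L)$ is, by Theorem \ref{thm:taut}, a non-zero class of $\HFLhat(-L)$ living in the top Alexander level $A = (n+||L||_T)/2$, its $\delta$-grading $A-M$ must lie within $\text{th}(L)$ of the canonical value $-\tau^*(L)$; this is the required bound.

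For the last claim, $\HFL$-thinness means $\text{th}(L) = 0$, which forces $A(\Leghat(\mathcal L)) - M(\Leghat(\mathcal L)) = -\tau^*(L)$. Furthermore, on thin links the unique $\delta$-line of $\HFLhat$ is prescribed by the signature $\sigma(L)$ and the number of components $n$, giving a closed formula for $\tau^*(L)$ in those terms; plugging this into the expressions for $d_3(\xi)$ and $d_3(\xi')$ from Proposition \ref{prop:taut} then yields the two advertised values. The one subtle point — and therefore the main obstacle — is to carefully match Cavallo's normalization of $\tau^*$ as an invariant of $L$ with the $(A,M)$-bigrading of $\HFLhat(-L)$, tracking the sign induced by mirroring; once the conventions are aligned, the argument becomes essentially mechanical.
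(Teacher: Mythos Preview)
Your proposal is correct and follows essentially the same route as the paper: both reduce the two displayed inequalities to the single bound $|A(\Leghat(\mathcal L))-M(\Leghat(\mathcal L))+\tau^*(L)|\leq\text{th}(L)$ via the formulas of Proposition~\ref{prop:taut} and Theorem~\ref{thm:taut}, and then obtain this bound from the fact that the $\delta$-lines of $\HFLhat$ are centered around $\tau$, together with the mirroring identity $\tau(L^*)=-\tau^*(L)$. Your flagged ``subtle point'' about matching $\tau^*(L)$ with the bigrading on $\HFLhat(L^*)$ is exactly the step the paper carries out explicitly, and the thin case likewise rests on the cited formula $\tau^*(L)=-\tfrac{n-1+\sigma(L)}{2}$.
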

Comparing the Alexander gradings (see Subsection \ref{subsection:AM}) immediately reveals that, unless the link satisfies the (classical) Thurston-Bennequin equality $\SL(L)=||L||_T$ where $\SL$ denotes the maximal self-linking number of $L$, the Legendrian link $\mathcal L$ built as perturbation of a taut foliation necessary lives in an overtwisted structure. Indeed, for $\mathcal L$ we have $\tb(\mathcal L)-\rot(\mathcal L)=||L||_T$ which by assumption cannot be achieved in $\xi_\text{std}$. 
For the link types that satisfy the equality, we cannot say whether $\mathcal L$ lives in $\xi_\text{std}$, except when $L$ is fibered in which case the construction gives the binding of the ambient structure it supports.

\begin{rmk}
 The precise definition of the Thurston norm $||L||_T$ can be found in \cite{Cavallo3}, together with a proof that for a non-trivial link $L$, with irreducible complement, one has that $||L||_T$ is equal to minus the maximal Euler characteristic of a (possibly disconnected) Seifert surface of $L$.
\end{rmk}
 
\subsection*{Non-loose realizations from knots fulfilling Thurston-Bennequin equality}
However, for the links for which the $\tau_\xi$-Bennequin inequality is sharp for some $\xi$  
we observe other non-loose Legendrian realizations, and the 3-dimensional invariant of their ambient is easily determined. Essentially, they are again all obtained by adding boundary parallel $\pi$-torsion (to the Legendrian link fulfilling the equality), but we might need destabilizations as we will describe in Section \ref{section:three}. Note then that our construction does not necessary give transversely non-loose links; for example, it does cover the non-loose Legendrian unknots. 

Anyway, the two constructions possibly coincide in the case of links satisfying the (classical) Thurston-Bennequin equality; such as strongly quasi-positive links. This is indeed the case for strongly quasi-positive bindings; for example, the positive Hopf link in $d_3=0$ and the positive torus knots $T_{p,q}$ in $d_3=1-(p-1)(q-1)$.

The main result is stated in terms of the invariant $\tau_\xi(L)$ of $L\hookrightarrow Y$ extracted from $c\HFL^-(-Y,L,\mathfrak t_\xi)$, and defined in \cite{Hedden_xi} by Hedden; the exact definition is given in Subsection \ref{subsection:AM}. 

\begin{thm}
 \label{thm:main}
 In every rational homology contact $3$-sphere $(Y,\xi)$ with non-zero contact invariant $\widehat c(Y,\xi)$, an $n$-component null-homologous link $L$ with a Legendrian representative $\mathcal L$ in $(Y,\xi)$ such that \[\tb(\mathcal L)-\rot(\mathcal L)=2\tau_\xi(L)-n\] has a non-loose realization with non-zero $\EH$-invariant in the overtwisted structure $\eta$ on $Y$, determined by $\mathfrak{t}_\eta=\mathfrak{t}_\xi\in \spinc(Y)$ and  $d_3(\eta)=d_3(\xi)-2\tau_\xi(L)+n$.
\end{thm}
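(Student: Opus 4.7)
The plan is to build the required non-loose representative of $L$ by inserting a boundary-parallel half Giroux torsion layer into the contact complement of $\mathcal L$, and then to identify $\eta$ by tracking the $d_3$ shift produced by this modification.

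First, I would extract from the hypotheses that the contact sutured complement $(Y(L),\Gamma_{\mathcal L},\xi|_{Y(L)})$ has non-vanishing $\EH$. Hedden's definition characterizes $2\tau_\xi(L)-n$ as the maximum of $\tb(\mathcal L')-\rot(\mathcal L')$ over Legendrian representatives $\mathcal L'$ of $L$ in $(Y,\xi)$ with non-zero $\Leg(\mathcal L')\in c\HFL^-(-Y,L,\mathfrak t_\xi)$; the assumption $\widehat c(Y,\xi)\ne 0$ ensures the supremum is realized, and the sharpness assumption $\tb(\mathcal L)-\rot(\mathcal L)=2\tau_\xi(L)-n$ then forces $\Leg(\mathcal L)\ne 0$. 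Via the sutured reformulation of \cite{SV,G.i,EVVZ}, this translates to $\EH(\xi|_{Y(L)})\ne 0$.

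Next, I would thicken the collar of a chosen component of $\partial Y(L)$ and replace it with a $T^2\times I$ layer carrying $\pi$ of Giroux torsion and identical boundary dividing set. Re-gluing the standard contact neighborhoods of the components of $\mathcal L$ produces an overtwisted contact structure $\eta$ on $Y$ (the $\pi$-torsion layer now adjoins the meridional disk of the re-attached solid torus), in which $\mathcal L$ sits as a Legendrian with unchanged classical invariants. The crucial claim is that $\EH(\eta|_{Y(L)})\ne 0$, which simultaneously gives non-looseness and the non-vanishing asserted by the theorem. I would decompose the inserted half torsion layer into two bypass attachments of opposite signs along disjoint arcs on the collar, and follow $\EH(\xi|_{Y(L)})$ through the Honda-Kazez-Mati\'c bypass exact triangles; because the attachments are boundary-parallel, the relevant gluing maps can be controlled explicitly and the class survives.

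For the ambient identification, since the modification is supported in a collar of $\partial Y(L)$, the relative first Chern class shifts by a multiple of the Poincar\'e duals of the components of $L$, which is torsion in $H^2(Y;\mathbb Z)$ because $Y$ is a rational homology sphere; hence $\mathfrak t_\eta=\mathfrak t_\xi$. Applying Gompf's formula $d_3=\tfrac14(c_1(X,J)^2-2\chi(X)-3\sigma(X))$ on a common almost-complex filling $X$ of $\xi$ and $\eta$, the change of the rotation-type contribution to $c_1\cdot[\Sigma]$ along a Seifert surface $\Sigma$ of $L$ becomes, by the Thurston-Bennequin equality, $n-2\tau_\xi(L)$, yielding $d_3(\eta)=d_3(\xi)-2\tau_\xi(L)+n$.

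The main obstacle is the survival of $\EH$ under the insertion of the half torsion layer: this goes against the behaviour of full Giroux torsion, which kills the contact invariant of closed manifolds (Ghiggini-Honda-Van Horn-Morris). The argument has to exploit both that the inserted torsion is exactly $\pi$ (permitting the two-bypass decomposition) and that it is boundary-parallel (allowing the bypass-triangle maps on $\SFH$ to be computed rather than being merely functorial); this is the genuinely new contact-topological input that underlies the theorem and aligns with the paper's stated theme that separating half Giroux torsion does not necessarily kill Legendrian invariants.
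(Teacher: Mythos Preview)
Your outline has the right geometric picture---insert boundary-parallel $\pi$-torsion and track $d_3$---but it misses the algebraic input that actually makes the non-vanishing step go through, and the bypass argument you sketch is incorrect.

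The sharpness hypothesis together with $\widehat c(Y,\xi)\neq 0$ gives more than $\Leg(\mathcal L)\neq 0$: since $A(\Leg(\mathcal L))=\tau_\xi(L)$, the class $\Leg(\mathcal L)$ is the \emph{generator} of its tower in $c\HFL^-$, hence not in the image of $U$, so $\Leghat(\mathcal L)\neq 0$ and then (by the equivalence in Lemma~\ref{lemma:plus}) $\Leg^+(\mathcal L)\neq 0$. This is Proposition~\ref{prop:plus}, and it is indispensable. Via the inverse-limit interpretation of $\Leg^+$ due to Etnyre--Vela-Vick--Zarev, non-vanishing of $\Leg^+$ says precisely that $\EH(Y(L),\overline{\xi_m})\neq 0$ for all sufficiently large $m$, where $\overline{\xi_m}=\xi_{\mathcal L}\cup\zeta^-_{\tb,\infty}\cup\zeta^-_{\infty,m}$. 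The non-loose realization is this $\mathcal M$ with complement $\overline{\xi_m}$; it is a negative \emph{destabilization} of the link $\mathcal L'$ you obtain after inserting the full $\pi$-torsion, and the paper never claims (nor needs) that $\EH(\mathcal L')$ itself is non-zero.

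Your proposed route---follow $\EH(\xi|_{Y(L)})$ through two bypass triangles---cannot work as stated. A $\pi$-torsion layer with equal boundary slopes decomposes into basic slices \emph{all of the same sign} (two basic slices of opposite signs glue to the $I$-invariant structure, not to $\pi$-torsion), so the ``two bypasses of opposite signs'' decomposition does not exist. More fundamentally, merely knowing $\EH$ or $\Leg$ is non-zero gives no control over survival under gluing the $\pi$-torsion layer; the whole point of invoking $\Leg^+$ is that its sutured description is tailored to extensions by negative basic slices in the destabilizing direction. Finally, to obtain the stated $d_3$ you must add $\pi$-torsion along \emph{all} components (Proposition~\ref{prop:Hopf} computes the shift as $-\tb(\mathcal L)+\rot(\mathcal L)$ for the full link, via the surgery description of the Lutz twist), not just one; and the $\spinc$-structure is unchanged because a Lutz twist along a null-homologous transverse link preserves $\mathfrak t$, not because of a relative $c_1$ argument.
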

Note that $\mathfrak t_\xi$ and $d_3(\xi)$ can be read from Heegaard Floer gradings of $\widehat c(Y,\xi)$.

Considering together Theorem \ref{thm:taut} for links that do not fulfill the $\tau_\xi$-Bennequin inequality (for any $\xi$) and Theorem \ref{thm:main} for all the others, we obtain a general existence result.
\begin{cor}
In every rational homology $3$-sphere where every tight structure has non-zero contact invariant, every null-homologous link with irreducible complement admits a non-loose realization with non-zero $\EH$-invariant.
\end{cor}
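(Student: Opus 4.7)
The plan is to split the argument into two cases, according to whether $L$ admits a Legendrian representative fulfilling the $\tau_\xi$-Bennequin equality $\tb(\mathcal L) - \rot(\mathcal L) = 2\tau_\xi(L) - n$ in some tight contact structure on $Y$; the two cases are handled respectively by Theorem \ref{thm:main} and Theorem \ref{thm:taut}.

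In the first case, some tight contact structure $\xi$ on $Y$ hosts a Legendrian representative $\mathcal L$ realizing the equality. The standing hypothesis on $Y$ guarantees $\widehat c(Y, \xi) \neq 0$, so Theorem \ref{thm:main} applies directly and produces a non-loose Legendrian realization of $L$ with non-vanishing $\EH$-invariant in the overtwisted contact structure on $Y$ determined by the gradings. Note that the unknot always falls into this case (whenever $Y$ admits any tight structure at all, its standard Legendrian realization there satisfies $\tb - \rot = -1 = 2\tau_\xi(U) - 1$), so the complementary case only concerns non-trivial links and Proposition \ref{prop:taut} is applicable.

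In the complementary case, no tight structure on $Y$ admits a Legendrian representative of $L$ realizing the equality. I invoke Theorem \ref{thm:taut} to obtain a Legendrian $\mathcal L$ with $\Leghat(\mathcal L) \neq 0$ in $\HFLhat(-Y, L)$ of Alexander grading $(n + ||L||_T)/2$; through the formula of Subsection \ref{subsection:AM}, this translates to $\tb(\mathcal L) - \rot(\mathcal L) = ||L||_T$. The main step, which I expect to be the delicate point, is to verify that the ambient contact structure of $\mathcal L$ is overtwisted. If it were a tight structure $\xi$, the $\tau_\xi$-Bennequin inequality together with the upper bound $\tau_\xi(L) \leq (n + ||L||_T)/2$ (coming from the top Alexander grading supporting $\HFLhat(-Y, L)$) would force
\[||L||_T = \tb(\mathcal L) - \rot(\mathcal L) \leq 2\tau_\xi(L) - n \leq ||L||_T,\]
hence equality throughout, contradicting the case hypothesis. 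Thus $\mathcal L$ is non-loose, and the inclusion chain $\Leghat \neq 0 \Rightarrow \Leg \neq 0 \Rightarrow \EH \neq 0$ recalled in the introduction yields the desired non-vanishing $\EH$-invariant.
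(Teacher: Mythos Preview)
Your proof is correct and follows exactly the two-case split the paper indicates in the sentence preceding the corollary: Theorem \ref{thm:main} for links admitting a representative realizing the $\tau_\xi$-Bennequin equality in some tight $\xi$, and Theorem \ref{thm:taut} (via Proposition \ref{prop:taut}) otherwise. You have filled in the key detail the paper leaves implicit, namely that in the second case the taut-foliation Legendrian cannot live in a tight structure because the chain $||L||_T = \tb(\mathcal L)-\rot(\mathcal L) \leq 2\tau_\xi(L)-n \leq ||L||_T$ would force the forbidden equality; this is precisely what the paper alludes to when it says ``comparing the Alexander gradings immediately reveals'' the structure is overtwisted.
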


In \cite{Cavallo} we study a concordance invariant of a smooth link in $S^3$ called $\tau$-set of $L$; in particular, it was shown that such a set contains two main numerical concordance invariants called $\tau(L)$ and $\tau^*(L)$ that satisfy $\tau^*(L)=-\tau(L^*)$; 
note that when $K$ is a knot then $\tau(K)=\tau^*(K)$.
It is proved in \cite{Cavallo} that, whenever $\mathcal L$ is an $n$-component Legendrian link in $S^3$ with link type $L$, if the Legendrian invariant $\Leg(\mathcal L)$ is non-torsion then it belongs to the unique infinite cyclic subgroup $\F[U]$ of $c\HFL^-(L^*)$ with generator in bigrading $(-2\tau^*(L^*)+1-n,-\tau^*(L^*))$. We then have the following relation: \[\tau_{\xi_{\text{std}}}(L)=-\tau^*(L^*)=\tau(L)\] which implies that Theorem \ref{thm:main} can be restated in an easier way for links in $S^3$.

\begin{thm}
 Every $n$-component link $L\hookrightarrow S^3$ with a Legendrian representative $\mathcal L$ in $\xi_{std}$ such that \[\tb(\mathcal L)-\rot(\mathcal L)=2\tau(L)-n\] has a non-loose realization, with non-zero $\EH$-invariant, in the overtwisted structure $\eta$ with $d_3(\eta)=-2\tau(L)+n$. In particular, this is the case for every quasi-positive link. 
\end{thm}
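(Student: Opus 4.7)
The plan is to specialize Theorem \ref{thm:main} to the ambient pair $(Y,\xi)=(S^3,\xi_{\text{std}})$ and to translate the invariants into their familiar $S^3$-specific form. The standard structure $\xi_{\text{std}}$ is tight with non-zero contact invariant $\widehat c(S^3,\xi_{\text{std}})$ and, by the normalization fixed at the beginning of the paper, $d_3(\xi_{\text{std}})=0$, so the hypotheses of Theorem \ref{thm:main} on the ambient manifold are met. After this specialization, the identity
\[\tau_{\xi_{\text{std}}}(L)\;=\;-\tau^*(L^*)\;=\;\tau(L)\]
recalled just above the statement (from \cite{Cavallo}) converts both the hypothesis $\tb(\mathcal L)-\rot(\mathcal L)=2\tau_{\xi_{\text{std}}}(L)-n$ and the conclusion $d_3(\eta)=d_3(\xi_{\text{std}})-2\tau_{\xi_{\text{std}}}(L)+n$ into precisely the formulas stated. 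The first part of the theorem is then a direct instance of Theorem \ref{thm:main}.

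For the \textbf{in particular} clause, I would verify that every quasi-positive link $L\hookrightarrow S^3$ admits a Legendrian representative in $\xi_{\text{std}}$ saturating $\tb(\mathcal L)-\rot(\mathcal L)=2\tau(L)-n$. Starting from a quasi-positive braid word for $L$, its transverse closure $\mathcal T$ in $\xi_{\text{std}}$ has self-linking number $\slk(\mathcal T)=c-b$, where $c$ is the word length and $b$ the braid index; by Rudolph's slice-Bennequin theorem together with the Thom-conjecture input of Kronheimer-Mrowka, this realizes the maximal self-linking number $\SL(L)=2g_4(L)-n$. Since $\tau(L)=g_4(L)$ on quasi-positive links (as established in \cite{Cavallo}, extending Hedden's result for knots), one has $\slk(\mathcal T)=2\tau(L)-n$. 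Any Legendrian approximation $\mathcal L$ of $\mathcal T$ then satisfies $\tb(\mathcal L)-\rot(\mathcal L)=\slk(\mathcal T)=2\tau(L)-n$, producing a representative to which the first part applies.

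The only genuine care needed is bookkeeping of normalizations: the sign of $\tau^*(L^*)$ under mirroring, the conventions for $d_3$, and the precise form of the Bennequin-type bound $\tb-\rot\leq 2\tau(L)-n$ extracted in \cite{Cavallo}. Once these are consistently aligned with the conventions fixed in the introduction, the theorem follows as an immediate corollary of Theorem \ref{thm:main}.
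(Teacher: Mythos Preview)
Your proposal is correct and follows essentially the same approach as the paper: the first part is exactly the specialization of Theorem~\ref{thm:main} to $(S^3,\xi_{\text{std}})$ via the identity $\tau_{\xi_{\text{std}}}(L)=\tau(L)$, and the quasi-positive clause is argued just as in Corollary~\ref{cor:qp}, by observing that the transverse closure of a quasi-positive braid saturates the slice Thurston--Bennequin inequality so that $\tb(\mathcal L)-\rot(\mathcal L)=2\tau(L)-n$. The only cosmetic discrepancy is that the paper phrases the slice bound in terms of the big slice genus $G_4(L)=\tfrac{n-\chi_4(L)}{2}$ (rather than $g_4$) and cites \cite{Hedden_quasi,Cavallo3} directly instead of Rudolph and Kronheimer--Mrowka, but the content is the same.
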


We think (see Subsection \ref{ss:^vs-}) that preserving tightness by adding boundary parallel $\pi$-torsion (and possibly destabilizing) is potentially a complete characterization of links with non-vanishing $\Leghat$ among links with non-vanishing $\Leg$. This is indeed the case for negative torus knots.
We recall that a Legendrian knot is \emph{strongly non-loose} when its complement has no Giroux $\pi$-torsion.
\begin{prop}\label{prop:T(p,-q)}
 The complement of a strongly non-loose Legendrian negative torus knot $\mathcal L$ remains tight after adding boundary-parallel (half) Giroux torsion, if and only if either $\Leghat(\mathcal L)$ or $\Leghat(-\mathcal L)$ does not vanish. 
\end{prop}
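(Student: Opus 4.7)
The plan is to combine the Etnyre--Min--Mukherjee classification of non-loose Legendrian representatives of $T(p,-q)$ from \cite{EMM} with known vanishing results for $\Leghat$ under stabilization and a direct convex-surface analysis of the effect of half Giroux torsion.

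First, I would enumerate the strongly non-loose Legendrian representatives of a fixed negative torus knot $T(p,-q)$: by \cite{EMM} these organize into a handful of mountain ranges, one per (tight or overtwisted) contact structure admitting such a representative. Using the fact that $\Leghat$ vanishes under positive stabilization, I would mark each vertex of each mountain range by whether $\Leghat(\mathcal L)$ or $\Leghat(-\mathcal L)$ vanishes: the vertices at which at least one of the two is non-zero are exactly the \emph{edges} of the range (the leftmost and rightmost columns of representatives above each peak), while the bulk vertices below the peaks have both invariants vanishing.

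For the reverse direction, let $\mathcal L$ be such an edge vertex, say with $\Leghat(\mathcal L)\neq 0$. Passing to sutured Floer homology via \cite{EVVZ, SV}, non-vanishing of $\Leghat(\mathcal L)$ translates into a non-zero $\EH$-class for the sutured manifold obtained from the complement $Y\setminus\nu(\mathcal L)$ by attaching a boundary-parallel $\pi$-torsion layer with the matching sign of basic slice. Since $\EH\neq 0$ implies tightness of the sutured manifold, the complement after adding half Giroux torsion is tight; the strongly non-loose hypothesis guarantees that there is no preexisting $\pi$-torsion confusing the bypass analysis.

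For the forward direction I argue the contrapositive. If both $\Leghat(\mathcal L)=0$ and $\Leghat(-\mathcal L)=0$, the classification forces $\mathcal L$ to be both a positive and a negative stabilization of some strongly non-loose $\mathcal L'$. I would then show that in the complement of such an $\mathcal L$, adding boundary-parallel half Giroux torsion creates an overtwisted disk: the positive stabilization bypass cancels against the negatively oriented basic slice of the $\pi$-torsion layer to produce a convex meridional disk with Legendrian boundary of zero twisting. One can alternatively phrase the same cancellation via the bypass exact triangle in sutured Floer homology to conclude directly that the resulting $\EH$ vanishes.

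The main obstacle is the overtwisted-disk construction in the forward direction: while the bypass-cancellation heuristic (positive stabilization $+$ negatively signed basic slice $=$ trivial layer) strongly suggests the result, keeping track of signs and realizing the convex-surface gluing rigorously---in particular identifying the stabilization bypasses inside the Legendrian complement with the correct basic-slice halves of the inserted $\pi$-torsion---is the technical heart of the argument. Fortunately the \cite{EMM} classification leaves only finitely many mountain-range models per $T(p,-q)$, so the verification reduces to a finite case check.
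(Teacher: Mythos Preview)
Your reverse direction rests on a misidentification. In the Stipsicz--V\'ertesi picture, $\Leghat(\mathcal L)$ is the $\EH$-class of the complement completed by a \emph{single} negative basic slice $\zeta^-_{\tb,\infty}$, not of the complement with a full $\pi$-torsion layer $\zeta^-_{\tb,\infty}\cup\zeta^-_{\infty,\tb}$ attached. Non-vanishing of the former tells you nothing about the latter without applying the gluing map for the second slice, and there is no reason for that map to preserve the class; indeed, these strongly non-loose $T_{p,-q}$ live in overtwisted $S^3$, so $\widehat c=0$ and hence $\Leg^+(\mathcal L)=0$ by Lemma~\ref{lemma:plus}, which is exactly the statement that the relevant $\EH$-classes die under such extensions. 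Thus a Floer-theoretic argument via $\Leghat$ cannot certify tightness of the complement plus half torsion; this direction genuinely needs the classification. Your forward direction is likewise unproved: vanishing of both $\Leghat(\pm\mathcal L)$ does not in general force $\mathcal L$ to be simultaneously a positive and a negative stabilization without already invoking \cite{EMM}, and the bypass cancellation you sketch (a $\zeta^+_{\tb+1,\tb}$ against a $\zeta^-_{\tb,\infty}$) does not collapse to a trivial layer, since those slices are not inverses along the Farey graph.

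The paper's argument is a translation rather than a direct geometric proof: \cite{EMM} shows that tightness after adding half torsion is equivalent to the knot being presented by a totally $2$-inconsistent path; in the surgery presentation of \cite{M.n} this reads as both singular-fiber stabilization choices being fully positive or both fully negative; and the large-surgery computation of \cite{M.n,M.t} identifies those two cases with $\Leghat(\mathcal L)\neq 0$ and $\Leghat(-\mathcal L)\neq 0$ respectively.
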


This completes the Legendrian realization problem for knot Floer homology of negative torus knots, studied in \cite{M.n}.
\begin{cor}\label{cor:T(p,-q)}
The only negative torus knots $T_{p,-q}$ for which the torsion part of the knot Floer homology $\HFK^-(T_{p,q})$ is generated by Legendrian invariants are the alternating ones, that are $T_{2,-2n-1}$ for all $n\geq1$.
\end{cor}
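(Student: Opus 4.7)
The plan is to combine Proposition~\ref{prop:T(p,-q)} with the Legendrian classification of non-loose negative torus knots carried out in \cite{M.n}. That paper enumerates the bigradings in $\HFK^-(T_{p,q})$ realizable by $\Leg$-invariants of non-loose Legendrian $T_{p,-q}$ and produces a candidate realization at each bigrading; what remains open there, and what Proposition~\ref{prop:T(p,-q)} now resolves, is deciding which of these candidates give honestly new torsion generators rather than $U$-multiples of already-known ones. By the proposition, a candidate contributes a new generator precisely when its complement stays tight under boundary-parallel half Giroux torsion, equivalently when $\Leghat(\mathcal L)$ or $\Leghat(-\mathcal L)$ is non-zero.

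For the alternating family $T_{2,-2n-1}$ I would verify directly, using the thin structure of $\HFK^-(T_{2,2n+1})$, that the Ozsv\'ath--Szab\'o staircase of $\HFK^-$ collapses to a zig-zag whose $U$-torsion generators all lie on the two extremal diagonals. These generators are then all hit by the strongly non-loose Legendrian representatives fulfilling the Thurston--Bennequin equality, taking both orientations into account. This gives the positive direction of the corollary.

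For every remaining $T_{p,-q}$, the staircase of $\HFK^-(T_{p,q})$ has interior corners, and these interior corners produce $U$-torsion generators whose bigradings, by the combination of the enumeration in \cite{M.n} and Proposition~\ref{prop:T(p,-q)}, cannot coincide with any allowable Legendrian-invariant bigrading: the latter are forced onto the extremal diagonals of the staircase. Hence the interior generators escape the $\F[U]$-span of Legendrian invariants, and the torsion part of $\HFK^-(T_{p,q})$ is not generated by them. The main obstacle will be this bigrading analysis -- confirming uniformly that the extremal-diagonal constraint imposed by strong non-looseness with preserved tightness misses every interior staircase corner for every non-alternating negative torus knot -- a step made effective by the case-by-case bookkeeping in \cite{M.n} once Proposition~\ref{prop:T(p,-q)} is in hand.
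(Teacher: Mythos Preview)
Your plan heads in the right direction but trades a one-line module-theoretic observation for a bigrading analysis you never carry out. The paper does not locate generators on diagonals at all; it reads a stronger constraint directly off the proof of Proposition~\ref{prop:T(p,-q)}: whenever $\Leg(\mathcal L)\notin U\cdot\HFK^-(T_{p,q})$ (so that it could serve as a module generator), both surgery vertices are fully positive, hence in particular neither is fully negative, which forces $U\cdot\Leg(\mathcal L)=0$. Thus a Legendrian invariant can generate a cyclic torsion summand $\F[U]/(U^{n_i})$ only when $n_i=1$. The non-alternating direction is then immediate from the fact that the $T_{2,2n+1}$ are exactly the positive torus knots of torsion order~$1$ (cf.\ \cite{JMZ}); the alternating direction is dispatched by citing the explicit knots $L_{k,l}$ of \cite{LOSSz} via \cite[Example~4.10]{M.n}.

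Your ``extremal diagonal'' assertion is, for $L$-space knots, essentially a geometric rephrasing of the condition $U\cdot\Leg=0$, so the two routes would eventually converge --- but you do not prove that rephrasing, and the case-by-case bookkeeping you flag as the main obstacle is precisely what the torsion-order argument eliminates. A smaller issue: for the positive direction, appealing to ``representatives fulfilling the Thurston--Bennequin equality'' is misplaced, since the relevant non-loose $T_{2,-2n-1}$ live in overtwisted structures rather than $(S^3,\xi_{\text{std}})$; one really needs the explicit non-loose families from \cite{LOSSz,M.n}.
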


\paragraph{Note on sublinks.}
Since every smooth knot can be perturbed to be Legendrian inside its tubular neighborhood, if a link $L$ contains a proper sublink, which satisfies the conditions in Theorem \ref{thm:main}, then $L$ itself will have a non-loose realization. In fact, if the complement $Y\setminus J$ of a Legendrian representative of $J$ is tight (or has non-zero $\EH$-invariant), and $L$ has $J$ as a sublink, then $Y\setminus L\hookrightarrow Y\setminus J$ and then the same has to hold also for the complement of $L$.
\begin{cor}
 \label{cor:sublink}
 If a link $L\hookrightarrow Y$ has a sublink $J$ of $m$ components which satisfies the hypothesis in Theorem \ref{thm:main} for a contact structure $\xi$ then $L$, as well as $J$, admits a non-loose realization in the overtwisted structure $\eta$ on $Y$, determined by $\mathfrak{t}_\eta=\mathfrak{t}_\xi\in \spinc(Y)$ and  $d_3(\eta)=d_3(\xi)-2\tau_\xi(J)+m$.
\end{cor}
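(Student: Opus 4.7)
The plan is to follow the informal sketch given in the ``Note on sublinks'' paragraph just preceding the statement, and turn it into a proof that also tracks the $\EH$-invariant.

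First, I would apply Theorem \ref{thm:main} to the sublink $J$. Since $J$ satisfies the stated hypothesis for the contact structure $\xi$, it admits a non-loose Legendrian realization $\mathcal{J}\subset(Y,\eta)$ in an overtwisted structure $\eta$ with $\mathfrak t_\eta=\mathfrak t_\xi$ and $d_3(\eta)=d_3(\xi)-2\tau_\xi(J)+m$, and with non-vanishing $\EH(\mathcal J)\in\SFH(-(Y\backslash\mathcal J))$.

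Next, I extend $\mathcal{J}$ to a Legendrian realization of $L$. The components of $L\setminus J$ sit inside $Y\setminus \mathcal J$ as smooth curves, and a standard Legendrian approximation inside Darboux neighborhoods of these curves makes them Legendrian without changing the smooth link type. This produces $\mathcal{L}\subset(Y,\eta)$ whose ambient contact structure is $\eta$ itself, so $\mathfrak t_\eta$ and $d_3(\eta)$ are automatically the values inherited from the realization of $J$.

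Finally, I argue that $\EH(\mathcal L)$ is non-zero. The complement $(Y\setminus \mathcal L,\xi|_{Y\setminus \mathcal L})$ is a sutured contact submanifold of $(Y\setminus \mathcal J,\xi|_{Y\setminus \mathcal J})$, since the latter is obtained from the former by gluing in standard contact neighborhoods of the Legendrian components of $\mathcal L\setminus \mathcal J$ (each such neighborhood is a contact solid torus with longitudinal sutures). By the Honda-Kazez-Mati\'c gluing theorem, there is a map \[\Phi:\SFH(-(Y\backslash\mathcal L))\longrightarrow\SFH(-(Y\backslash\mathcal J))\] sending $\EH(\mathcal L)$ to $\EH(\mathcal J)$, so non-vanishing of $\EH(\mathcal J)$ forces non-vanishing of $\EH(\mathcal L)$. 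In particular $(Y\backslash\mathcal L,\xi|)$ is tight, which makes $\mathcal L$ non-loose in the overtwisted $(Y,\eta)$.

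The only slightly delicate step is the last one: one must verify that the sutured contact pieces match (boundary sutures along meridians of the extra Legendrian components) so that the HKM gluing map indeed applies in the direction sending $\EH(\mathcal L)\mapsto\EH(\mathcal J)$. Everything else is either immediate from Theorem \ref{thm:main} or from routine Legendrian approximation, so the gluing functoriality of $\EH$ is really the only substantive input.
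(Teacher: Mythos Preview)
Your proposal is correct and follows essentially the same approach as the paper, which gives only the informal sketch in the ``Note on sublinks'' paragraph: apply Theorem \ref{thm:main} to $J$, Legendrian-approximate the remaining components inside $Y\setminus\mathcal J$, and conclude from $Y\setminus\mathcal L\hookrightarrow Y\setminus\mathcal J$. The only difference is that the corollary as stated asks merely for a non-loose realization, so the elementary fact ``a contact submanifold of a tight manifold is tight'' already suffices; your use of the HKM gluing map to push through non-vanishing of $\EH$ is a slight (harmless) strengthening, matching the parenthetical ``(or has non-zero $\EH$-invariant)'' in the paper's sketch.
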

In general, non-loose representatives for different sublinks, satisfying Theorem \ref{thm:main}, will produce non-loose representatives of $L$ living in different overtwisted structures.

On the other hand, from the construction we see (in Remark \ref{rmk:sub}) that with $\mathcal L$ as in Theorem \ref{thm:main} we have for each of its null-homologous Legendrian sublink $\mathcal J\subset\mathcal L$ an additional non-loose realization of L (with non-zero $\EH$-invariant) in $d_3(\eta_{J})=d_3(\xi)-(\tb(\mathcal J)-\rot(\mathcal J))$. However, unless $J$ satisfies the hypothesis in Theorem \ref{thm:main}, there might be no non-loose realization of $J$ in $d_3(\eta_{J})$.

Everything we said about sublinks apply also to the construction from taut sutured hierarchy, except the computation of the 3-dimensional invariant in the corollary.

\medskip
\subsection*{Non-vanishing of contact invariants in presence of half Giroux torsion}
We are particularly interested in the non-loose Legendrian knots $T_{2,2n+1}$ in the overtwisted $(S^3,\xi_n)$ with 3-dimensional invariant $d_3(\xi_n)=1-2n$, that is, the knots which were first described by Lisca, Ozsv\'ath, Stipsicz and Szab\'o as $L(n)$ in \cite[Figure 9]{LOSSz}. In \cite{LOSSz}, they proved these knots have non-vanishing $\widehat\Leg$; thus, they are transversely non-loose and we need no destabilizations in our construction (in Subsection \ref{ss:^vs-}) to ensure tightness. (The fact that $L(n)$ as transverse knots can be obtained by adding boundary parallel $\pi$-torsion to the maximal Thurston-Bennequin number Legendrian representatives in $\xi_{\text{std}}$ was observed already in \cite{EMM}. Note though that Legendrian knot $L(n)$ does not contain $\pi$-torsion; in fact, only its fifth negative stabilization does.)

\begin{rmk}
In \cite[Section 1.4]{EVVZ} Etnyre, Vela-Vick and Zarev define vanishing slope $\text{Van}^-(\mathcal K)$ for a null-homologous Legendrian knot $\mathcal K$ with a given Seifert framing and non-zero $\EH$-invariant as
\[\text{Van}^-(\mathcal K)=\sup\{(Y(K),\xi'_{\mathcal K}) \text{ negatively extends } (Y(K),\xi_{\mathcal K}), \EH(Y(K), \xi'_{\mathcal K})\neq0\}.\]
It is given as a pair $(-n,r)\in\Z_{\geq0}\times(\Q\cup\infty)$ where $n$ measures the amount of $\pi$-torsion and $r$ equals the boundary (dividing) slope. It is clear this invariant is only interested for knots with vanishing $\Leg$, when it is smaller than $(0,\infty)$, and the ones with non-vanishing $\Leg^+$, when it is greater than $(0,\infty)$; in all other cases it equals $(0,\infty)$.
In this terminology, the above paragraph shows that $\text{Van}^-(L(n))=(-1,\infty)$.
\end{rmk}

We know that all contact invariants of overtwisted structures vanish \cite{OSz.c}. Moreover, they vanish already in the presence of Giroux $2\pi$-torsion \cite{GHVHM}. The described example, however, exposes that the $\pi$-torsion is not enough for contact invariants to vanish, even if it appears (as for a knot complement necessary) along a separating torus.
\begin{prop}
 There exist Legendrian knots in $S^3$ with non-vanishing $\Leghat$-invariant, but whose complement contains half Giroux torsion.
\end{prop}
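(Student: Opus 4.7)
The plan is to exhibit such Legendrian knots using the non-loose $L(n)\subset(S^3,\xi_n)$ of topological type $T_{2,2n+1}$ discussed in the preceding paragraphs. Recall that $\Leghat(L(n))\neq 0$ by \cite{LOSSz}, and that by \cite{EMM} the transverse push-off of $L(n)$ is obtained by inserting a boundary-parallel $\pi$-torsion layer in the complement of the maximal Thurston-Bennequin representative of $T_{2,2n+1}$ in $\xi_\text{std}$. The Legendrian $L(n)$ itself contains no $\pi$-torsion in its complement, but the claim is that this changes after enough negative stabilizations.

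First, I would invoke the fact from \cite{LOSSz} that $\Leghat$ is preserved under negative Legendrian stabilization (equivalently, it descends to the transverse isotopy class of the push-off, and negative stabilization preserves that class because it preserves $\tb-\rot$). Consequently $\Leghat(S_-^k(L(n)))\neq 0$ for every $k\ge 0$, so it suffices to find some $k$ for which the complement of $S_-^k(L(n))$ contains a separating torus bounding a half-Giroux-torsion layer.

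The expected value $k=5$ is indicated already in the discussion above. To verify it, I would compare two convex tori parallel to $\partial\nu(L(n))$: the convex boundary of a standard neighborhood of $S_-^k(L(n))$, whose dividing slope is read off the Farey tessellation via the basic-slice decomposition induced by the $k$ stabilizations, and the torus left behind by the $\pi$-torsion insertion of \cite{EMM}. Matching these slopes and checking that the identification occurs precisely at $k=5$ (and not earlier) is the main obstacle; the computation is a bookkeeping exercise in convex surface theory, essentially extracted from \cite{EMM}. With this in hand, $\mathcal K = S_-^5(L(n))$ provides the required Legendrian knot, having $\Leghat(\mathcal K)\neq 0$ while its complement contains half Giroux torsion, which proves the proposition.
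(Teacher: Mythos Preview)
Your proposal is correct and follows exactly the paper's approach: the proposition is established in the paragraphs immediately preceding it, where the authors observe that $L(n)$ has non-vanishing $\Leghat$ by \cite{LOSSz}, and that its fifth negative stabilization contains boundary-parallel $\pi$-torsion (via the \cite{EMM} description), so invariance of $\Leghat$ under negative stabilization yields the desired example. The paper does not give a separate proof beyond this discussion, and your outline captures precisely these ingredients, including the identification of $k=5$ as the relevant number of stabilizations.
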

Therefore, we have the following corollary, which, in fact, could be seen already from the Stein fillability of the canonical contact structure on $T^3$ \cite{E1}, where the $\pi$-torsion is non-separating.
\begin{cor}
 The $\EH$-invariant of the half Giroux torsion layer $\big(T^2\times [0,1], \Ker(\sin(\pi t)\mbox{d} x+\cos(\pi t)\mbox{d} y\big)$ is non-zero.
\end{cor}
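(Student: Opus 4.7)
The strategy is to combine the preceding proposition with the functoriality of the $\EH$-invariant under gluing of sutured contact manifolds, as developed by Honda, Kazez and Mati\'c. The key principle is that $\EH$ behaves multiplicatively under gluing along convex surfaces: if a sutured contact manifold $(M,\xi)$ decomposes along a convex surface $\Sigma$ into $(M_1,\xi_1)\cup_\Sigma(M_2,\xi_2)$, then vanishing of $\EH(M_i,\xi_i)$ for either piece forces $\EH(M,\xi)$ to vanish. Therefore, to show $\EH$ of the half Giroux torsion layer is non-zero, it suffices to exhibit one sutured contact manifold in which this layer sits as a convex submanifold and whose $\EH$-invariant is non-zero.

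First, I take the Legendrian knot $\mathcal L \hookrightarrow S^3$ supplied by the previous proposition: $\Leghat(\mathcal L)$ does not vanish, yet the complement $S^3\setminus\nu(\mathcal L)$ contains separating half Giroux torsion. Via the sutured interpretation of the hat Legendrian invariant \cite{SV,EVVZ}, the non-vanishing of $\Leghat(\mathcal L)$ is equivalent to the non-vanishing of $\EH$ of the sutured complement $(S^3\setminus\nu(\mathcal L),\Gamma,\xi_{\mathcal L})$, equipped with the appropriate meridional sutures and contact structure induced by $\xi$.

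By assumption, there is an incompressible convex torus $T\subset S^3\setminus\nu(\mathcal L)$ such that one of the two contact pieces it bounds is isotopic to the standard half Giroux torsion layer $\big(T^2\times[0,1],\ker(\sin(\pi t)\,dx+\cos(\pi t)\,dy)\big)$. Cutting along $T$ decomposes the sutured contact complement as
\[
(S^3\setminus\nu(\mathcal L),\xi_{\mathcal L})\;=\;(N,\xi_N)\cup_{T}\big(T^2\times[0,1],\xi_{1/2}\big),
\]
where $(N,\xi_N)$ is the remaining piece and the dividing sets on $T$ match up on both sides.

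Applying the gluing property of $\EH$ to this decomposition, the non-vanishing of $\EH(S^3\setminus\nu(\mathcal L),\xi_{\mathcal L})$ obtained from $\Leghat(\mathcal L)\neq 0$ forces both $\EH(N,\xi_N)\neq 0$ and $\EH(T^2\times[0,1],\xi_{1/2})\neq 0$. The latter is precisely the statement of the corollary. The only subtle point in the argument is verifying that the dividing curves on the separating torus $T$ carried by the half Giroux torsion layer in the complement of $\mathcal L$ indeed realize the boundary configuration of the model half torsion layer stated in the corollary; this is immediate from the fact that a boundary-parallel half torsion layer inside a knot complement is, by construction, contactomorphic to the standard model with the meridional dividing slope inherited from $\partial\nu(\mathcal L)$.
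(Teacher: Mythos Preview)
Your argument is correct and is precisely the deduction the paper intends: embed the half torsion layer in a sutured contact manifold with non-zero $\EH$ (supplied by the preceding proposition), then apply the Honda--Kazez--Mati\'c gluing map $\Phi_\zeta(\EH)=\EH$ in the contrapositive. One small wording issue: with \emph{meridional} sutures $\Gamma_\infty$ (the $\infty$-completion realizing $\Leghat$) the boundary-parallel half torsion layer sits strictly in the interior, separated from $\partial\nu(\mathcal L)$ by the completing basic slice, so cutting along a single torus $T$ does not literally yield the layer as one of the two pieces; either cut along both boundary tori of the layer, or work instead in the Legendrian complement with $\Gamma_{\tb}$ sutures, where $\EH(\mathcal L)\neq0$ still follows from $\Leghat(\mathcal L)\neq0$ via $\widehat i_*$ and the layer is genuinely boundary-adjacent. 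The paper additionally remarks that the same conclusion can be read off from Eliashberg's Stein fillability of the canonical contact structure on $T^3$, where the half torsion is present but non-separating.
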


\paragraph*{Overview}
In Section \ref{section:two} we briefly review the different versions of Legendrian invariant, focusing on relation between its non-vanishing and grading (as needed for Proposition \ref{prop:taut} and in Theorem \ref{thm:main}). In Section \ref{section:three} we recall the sutured reinterpretation of invariants, and prove non-vanishing for taut foliation induced contact structures, which implies Theorem \ref{thm:taut}; we recall the definition of Giroux torsion in a contact 3-manifold and how it is related to Legendrian invariants, this is the key part in the proof of Theorem \ref{thm:main}. Finally, in Section \ref{section:four} we show some particular examples of presented constructions, for quasi-positive links, for fibered links, and further comment on sublinks.

\paragraph*{Acknowledgements}
We owe it to a question of Paolo Ghiggini to think about the behavior of contact invariants in the presence of half torsion. We thank Marco Golla, Georgios Dimitroglou Rizell, Antonio Alfieri and Steve Boyer for related conversations.

AC is partially supported by the NCN, under the project "Selected topics in knot theory" led by Maciej Borodzik at IMPAN. IM is supported by the Knut and Alice Wallenberg Foundation through the grant KAW 2021.0191, and by the Swedish Research Council through the grant number 2020-04426, as a post-doctoral researcher at the Uppsala Universitet. We are grateful to the Alfr\'ed R\'enyi Institute of Mathematics in Budapest for its hospitality during the semester on "Singularities and low-dimensional topology", and for the support from \'Elvonal (Frontier) grant KKP126683 (given by NKFIH).

\section{Legendrian invariants of links}
\label{section:two}
\subsection{Link Floer complexes}
We will need to understand the algebraic structure of link Floer complexes, as introduced by Ozsv\'ath and Szab\'o in \cite{OSlinks}, and further studied by the first author in \cite{Cavallo1}.

Assume that $Y$ is a rational homology 3-sphere and denote by $n$ the number of components of a null-homologous smooth link $L\hookrightarrow Y$. The bigraded chain complex $c\CFL^-(D)$ is defined from a multi-pointed \emph{Heegaard diagram} $D=(\Sigma,\alpha,\beta,\textbf w,\textbf z)$ as in \cite{OSlinks}: it is a finite dimensional free $\F[U]$-module, over the set of the intersection points of $D$, where $\F$ is the field with two elements; and its differential $\partial^-$ avoids the basepoints in $\textbf z$ and counts the multiplicities of the $\textbf w$'s. Such a complex factors through the $\spinc$-structures $\mathfrak t_1,...,\mathfrak t_t$ on $Y$. 
In the rest of the paper we denote the corresponding summand just by $c\CFL^-(Y,L,\mathfrak t)$, assuming that the diagram used to construct it is fixed a priori. 
The homology of $c\CFL^-(Y,L,\mathfrak t)$ is a finite dimensional $\F[U]$-module, called \emph{collapsed link Floer homology}, whose isomorphism type is a smooth link invariant of the link $L$, as proved in \cite{Book}.

By definition $c\CFL^-(Y,L,\mathfrak t)$ is a subcomplex of $c\CFL^\infty(Y,L,\mathfrak t):=c\CFL^-(Y,L,\mathfrak t)\otimes\F[U,U^{-1}]$, the latter being the graded object associated to the complex studied in \cite{Cavallo1}. We can then introduce two new complexes as follows \[c\CFL^+(Y,L,\mathfrak t):=\dfrac{c\CFL^\infty(Y,L,\mathfrak t)}{U\cdot c\CFL^-(Y,L,\mathfrak t)}\hspace{1cm}\text{ and }\hspace{1cm}\widehat{\CFL}(Y,L,\mathfrak t):=\dfrac{c\CFL^-(Y,L,\mathfrak t)}{U\cdot c\CFL^-(Y,L,\mathfrak t)}\:;\] we have that $c\CFL^+(Y,L,\mathfrak t)$ is an infinitely generated $\F[U]$-module, while $\widehat{\CFL}(Y,L,\mathfrak t)$ is a finitely generated $\F$-vector space (the $U$-action is trivial), see \cite{Book}. Hence, we have the following bigraded chain maps 
\begin{equation}
 \label{eq:maps}
 i^-:c\CFL^-(Y,L,\mathfrak t)\longrightarrow\widehat{\CFL}(Y,L,\mathfrak t)\hspace{1cm}\text{ and }\hspace{1cm}i^+:\widehat{\CFL}(Y,L,\mathfrak t)\longrightarrow c\CFL^+(Y,L,\mathfrak t)
\end{equation}
which induce bigraded homomorphisms $i^-_*$ and $i^+_*$ in homology.

A non-zero element in $c\CFL^-(Y,L,\mathfrak t)$ has a well-defined bigrading $(M,A)$, where $M$ is the \emph{Maslov} and $A$ the \emph{Alexander grading}, see \cite{OSlinks}; and multiplication by $U$ drops $M$ by 2 and $A$ by 1. It follows from \cite{Book} that there is a decomposition as direct sum of cyclic subgroups as follows: \[c\HFL^-(Y,L,\mathfrak t)\cong\F[U]_{(d_1,s_1)}\oplus\cdot\cdot\cdot\oplus\F[U]_{(d_{\ell},s_{\ell})}\oplus \text{Tors}\hspace{0.5cm}\text{ with }\hspace{0.5cm}\ell=2^{n-1}\cdot\rk\widehat\HF(Y,\mathfrak t)\:,\] where $\widehat\HF(Y,\mathfrak t)$ is the Heegaard Floer homology of $Y$ in the $\spinc$-structure $\mathfrak t$ (\cite{OS}) and the integers $-s_1,...,-s_{\ell}$ coincide with the elements of the $\tau$-set of $L$, the latter being the link concordance invariant introduced in \cite{Book}.
\begin{rmk}
 Note that the homology groups $c\HFL^-$ and $c\HFL^+$ coincide with $\HFK^-$ and $\HFK^+$ in the case of knots. 
\end{rmk}

\subsection{The Legendrian invariant and its grading}
\label{subsection:AM}
We survey the construction of Legendrian invariant in link Floer homology, as originally defined for knots by Lisca, Ozsv\'ath, Stipsicz and Szab\'o \cite{LOSSz}, and extended to links by the first author \cite{Cavallo2}. Particular attention is paid to its grading, and the conditions under which it survives through maps from $c\HFL^-$ to $\widehat{\HFL}$ and further to $c\HFL^+$.

Given a Legendrian $n$-component link $\mathcal L$ in $(Y,\xi)$, with link type $L$, we can find an open book decomposition $(B,\pi)$ for $Y$ such that $\mathcal L$ lies on the page $S_1=\pi^{-1}(1)$, see \cite{LOSSz} for the precise definition. Moreover, one can associate a multi-pointed Heegaard diagram to $(B,\pi)$: this diagram is called a \emph{Legendrian Heegaard diagram} and it is denoted by $D_{(B,\pi,A)}$, where $A$ is a collection of pairs of arcs in $S_1$ as described in \cite{LOSSz,Cavallo2}. The surface $\Sigma$ in $D_{(B,\pi,A)}$ is obtained by gluing the pages $S_1$ and $S_{-1}:=\pi^{-1}(-1)$ together; moreover, all the basepoints are contained in $S_1$.

Applying Heegaard Floer theory to the diagram $D_{(B,\pi,A)}$ yields a bigraded 
chain complex $c\CFL^-(D_{(B,\pi,A)})$, representing the link $L$ in the manifold $-Y$ (with reversed orientation), and its homology is then $c\HFL^-(-Y,L,\mathfrak t_\xi)$, see \cite{OSz.c}, where $\mathfrak t_\xi$ is the $\spinc$-structure induced by $\xi$ on $Y$.
As before, from now on we suppose that $D_{(B,\pi,A)}$ is fixed and we just write $c\CFL^-(-Y,L,\mathfrak t_\xi)$ for the chain complex defined from the Legendrian Heegaard diagram.

Furthermore, there is only one cycle in $c\CFL^-(-Y,L,\mathfrak t_\xi)$ 
that lies on the page $S_1$: this cycle is denoted by $x(\mathcal L)$ and its homology class $\mathfrak L(\mathcal L):=[x(\mathcal L)]$ in $c\HFL^-(-Y,L,\mathfrak t_\xi)$ is a Legendrian invariant of the link $\mathcal L$, see \cite{HKM.eh,LOSSz,Cavallo2}. 
Different versions of the Legendrian invariant $\Leg(\mathcal L)$ can be defined using the maps in Equation \eqref{eq:maps}: we call $\Leghat(\mathcal L):=i^-_*(\Leg(\mathcal L))$ and $\Leg^+(\mathcal L):=i^+_*(\Leghat(\mathcal L))$. In particular, when $\Leg^+(\mathcal L)$ is non-vanishing then both $\Leg(\mathcal L)$ and $\Leghat(\mathcal L)$ are non-vanishing. 

From \cite{Cavallo2} we have the chain map \[F:c\CFL^-(-Y,L,\mathfrak t_\xi)\longrightarrow\widehat\CF(D_{(B,\pi,A)},\mathfrak t_\xi)\] defined by setting $U=1$ which induces \[F_*:c\HFL^-(-Y,L,\mathfrak t_\xi)\longrightarrow\widehat\HF(-Y,\mathfrak t_\xi)\otimes(\F_{(-1)}\oplus\F_{(0)})^{\otimes n-1}\:.\] We have that the kernel of $F_*$ coincides with the torsion subgroup, while $F_*$ collapses each of the towers $\F[U]_{(d_i,s_i)}$ of $c\HFL^-(-Y,L,\mathfrak t_\xi)$ to exactly one homology class in $\widehat\HF(-Y,\mathfrak t_\xi)\otimes(\F_{(-1)}\oplus\F_{(0)})^{\otimes n-1}$. In addition, note that the Ozsv\'ath-Szab\'o contact invariant $\widehat c(Y,\xi)$, introduced in \cite{OSz.c}, is a homology class in the group $\widehat\HF(-Y,\mathfrak t_\xi)$. 
\begin{lemma}[Cavallo]
 \label{lemma:tower}
 With the assumptions made above, the Legendrian invariant $\Leg(\mathcal L)$ always belongs to $F^{-1}_*(\widehat c(Y,\xi)\otimes\textbf e_{1-n})$ in $c\HFL^-(-Y,L,\mathfrak t_\xi)$. Furthermore, the map $F$ sends an element with bigrading $(M,A)$ into one of grading $M-2A$, which implies that 
\begin{equation}
 \label{eq:d3}
 M(x(\mathcal L))=M(\Leg(\mathcal L))=-d_3(Y,\xi)+2A(x(\mathcal L))+1-n\:.
\end{equation}
\end{lemma}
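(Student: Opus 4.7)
The plan is to combine three ingredients: the grading-shift behavior of the map $F$, the identification of $F(x(\mathcal L))$ with the Ozsv\'ath--Szab\'o contact invariant, and the known Maslov grading of $\widehat c(Y,\xi)$.

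For the grading shift, I would observe that $U$ has bigrading $(-2,-1)$ on $c\CFL^-$, so the combination $M-2A$ is $U$-invariant. The quotient map setting $U=1$ therefore descends to a grading-preserving map once the target $\widehat\CF(D_{(B,\pi,A)},\mathfrak t_\xi)$ is equipped with the single grading $M-2A$. This immediately yields the claim that $F$ sends an element of bigrading $(M,A)$ into one of grading $M-2A$.

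For the identification, I would examine the Legendrian Heegaard diagram $D_{(B,\pi,A)}$ built from an open book carrying $\mathcal L$ on the page $S_1$. Forgetting the $\textbf z$-basepoints yields a multi-pointed Heegaard diagram for $-Y$ whose associated hat complex is isomorphic to $\widehat\CF(-Y,\mathfrak t_\xi)\otimes(\F_{(-1)}\oplus\F_{(0)})^{\otimes n-1}$, the extra tensor factor accounting for the $n$ basepoints indexing the link components. The cycle $x(\mathcal L)$ lies entirely on $S_1$, exactly where the Ozsv\'ath--Szab\'o contact cycle is supported, and the equivalences of \cite{HKM.eh,SV,G.i,EVVZ} force $F(x(\mathcal L))$ to represent $\widehat c(Y,\xi)\otimes\textbf e_{1-n}$, with $\textbf e_{1-n}$ the bottom-Maslov-grading generator of the tensor factor picked out by the convention for placing the $\textbf w$-basepoints on $S_1$.

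Combining these with the identity $M(\widehat c(Y,\xi))=-d_3(Y,\xi)$ from \cite{OSz.c} and the grading $1-n$ of $\textbf e_{1-n}$ yields
\[M(x(\mathcal L))-2A(x(\mathcal L))=-d_3(Y,\xi)+1-n,\]
which rearranges to the stated formula. The main subtle point will be confirming that $F(x(\mathcal L))$ lands specifically in the $\textbf e_{1-n}$ summand rather than in some higher-Maslov piece of the tensor factor; this is a direct bookkeeping check from the Legendrian Heegaard diagram in \cite{Cavallo2}, identifying which of the $2^{n-1}$ tensor-factor classes corresponds to the placement of the basepoints adjacent to $x(\mathcal L)$ on $S_1$. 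Once this is pinned down, the grading computation is automatic.
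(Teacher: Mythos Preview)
The paper does not actually prove this lemma: it is stated with the attribution ``[Cavallo]'' and is quoted from \cite{Cavallo2}, so there is no in-paper argument to compare against. Your outline is the standard one and is correct in spirit; it is essentially the argument carried out in \cite{Cavallo2}.

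One small wording issue: setting $U=1$ in $c\CFL^-$ does not correspond to \emph{forgetting the $\textbf z$-basepoints}. The differential $\partial^-$ already avoids $\textbf z$ and weights $\textbf w$ by powers of $U$; putting $U=1$ lets disks cross $\textbf w$ freely while still avoiding $\textbf z$, so the resulting complex is the multi-pointed $\widehat\CF$ with $\textbf z$ as the basepoints. This does not affect your conclusion (you still land in $\widehat\HF(-Y,\mathfrak t_\xi)\otimes(\F_{(-1)}\oplus\F_{(0)})^{\otimes n-1}$ and the contact cycle on $S_1$ is the same intersection point), but the description should be corrected. With that fix, your three ingredients---$U$-invariance of $M-2A$, the open-book identification $F_*(\Leg(\mathcal L))=\widehat c(Y,\xi)\otimes\textbf e_{1-n}$, and $M(\widehat c(Y,\xi))=-d_3(Y,\xi)$---combine exactly as you say to give Equation~\eqref{eq:d3}.
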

In general, the invariant $\Leg(\mathcal L)$ can be vanishing, but the previous lemma shows that whenever $\widehat c(Y,\xi)$ is non-zero then $\Leg(\mathcal L)$ is actually non-torsion and belongs to the tower $F^{-1}_*(\widehat c(Y,\xi)\otimes\textbf e_{1-n})$. In particular, we know that this is the case when $(Y,\xi)$ is strongly symplectically fillable, see \cite{OSz.c}.

We can then state the following result from the second author; the proof appears in \cite{M.s} for knots, but it is consistent with the definition of $c\CFL^+(-Y,L,\mathfrak t_\xi)$, its homology $c\HFL^+(-Y,L,\mathfrak t_\xi)$ and $\Leg^+(\mathcal L)$ given before and thus also works for links.
\begin{lemma}[Matkovi\v c]
 \label{lemma:plus}
 The invariant $\Leg^+(\mathcal L)\in c\HFL^+(-Y,L,\mathfrak t_\xi)$ is non-zero if and only if both $\Leghat(\mathcal L)$ and $\widehat c(Y,\xi)$ are non-zero.
\end{lemma}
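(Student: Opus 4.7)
The proof hinges on Lemma \ref{lemma:tower} and the interplay between the three complexes $c\CFL^-$, $c\CFL^\infty \cong c\CFL^-\otimes_{\F[U]}\F[U,U^{-1}]$, and $c\CFL^+ = c\CFL^\infty/U\cdot c\CFL^-$. The key structural fact is that the composition $i^+\circ i^-:c\CFL^-\to c\CFL^+$ factors as $\pi\circ\iota$, where $\iota:c\CFL^-\hookrightarrow c\CFL^\infty$ is the inclusion and $\pi:c\CFL^\infty\twoheadrightarrow c\CFL^+$ is the projection; consequently $\Leg^+(\mathcal L)=\pi_*(\iota_*(\Leg(\mathcal L)))$ at the level of homology.

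For the forward direction, since $\Leg^+=i^+_*(\Leghat)$, the non-vanishing of $\Leg^+$ immediately forces $\Leghat\neq 0$. To see $\widehat c(Y,\xi)\neq 0$, note that $\Leg^+\neq 0$ implies $\iota_*(\Leg)\neq 0$ in $c\HFL^\infty$. Because $\F[U]$ is a principal ideal domain, localization is exact and $c\HFL^\infty\cong c\HFL^-\otimes_{\F[U]}\F[U,U^{-1}]$, so $\ker \iota_*$ is precisely the $U$-torsion submodule. Lemma \ref{lemma:tower} then yields $\widehat c(Y,\xi)\neq 0$, since vanishing of $\widehat c$ would force $\Leg$ to be torsion.

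For the converse, suppose $\Leghat\neq 0$ and $\widehat c\neq 0$. By Lemma \ref{lemma:tower}, $\Leg$ sits in one of the free summands $\F[U]_{(d,s)}$ of $c\HFL^-$, so $\Leg=U^j\cdot g$ for the generator $g$ at bigrading $(d,s)$ and some $j\geq 0$. The $U$-action on $\widehat{HFL}$ is trivial (already on the chain level $\widehat{CFL}=c\CFL^-/Uc\CFL^-$), so $\Leghat=i^-_*(U^j g)=U^j\cdot i^-_*(g)=0$ for any $j\geq 1$; hence $j=0$ and $\Leg=g$. The remaining task is to prove $\pi_*(\iota_*(g))\neq 0$ in $c\HFL^+$.

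This last step is the main obstacle, which I would treat by contradiction. Assume $\pi_*(\iota_*(g))=0$; then there exist $y\in c\CFL^\infty$ and $w\in c\CFL^-$ with $\partial y=x(\mathcal L)+U w$, where $x(\mathcal L)$ is a cycle representative of $g$. Writing $y=U^{-k}y'$ with $y'\in c\CFL^-$ and clearing denominators produces $\partial y'=U^k x(\mathcal L)+U^{k+1}w$, which upon passing to homology becomes the identity $U^k(g+Uw')=0$ in $c\HFL^-$ with $w'=[w]$. A bigrading check forces $w'$ to lie in bigrading $(d+2,s+1)$, strictly above the generator of $\F[U]_{(d,s)}$; hence $w'$ has no component in this summand and, in the direct sum decomposition of $c\HFL^-$, the element $U^k(g+Uw')$ retains $U^k g$ as its component in $\F[U]_{(d,s)}$. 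Freeness of the tower then contradicts $U^k g=0$, completing the proof.
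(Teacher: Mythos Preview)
The paper does not actually prove this lemma; it attributes the result to \cite{M.s} for knots and asserts that the same argument works for links. So there is no in-paper proof to compare against, and I will evaluate your argument on its own.

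Your forward direction is fine: $\Leg^+=i^+_*(\Leghat)\neq 0$ immediately gives $\Leghat\neq 0$, and $\Leg^+=\pi_*\iota_*(\Leg)\neq 0$ forces $\iota_*(\Leg)\neq 0$, so $\Leg$ is non-torsion and Lemma~\ref{lemma:tower} yields $\widehat c(Y,\xi)\neq 0$.

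The converse, however, has a genuine gap. You claim that Lemma~\ref{lemma:tower} places $\Leg$ \emph{entirely} inside a single free summand $\F[U]_{(d,s)}$, write $\Leg=U^j g$, and then use $\Leghat\neq 0$ to force $j=0$. But Lemma~\ref{lemma:tower} only says $F_*(\Leg)=\widehat c\otimes\textbf e_{1-n}$; in a bigraded $\F[U]$-module splitting of $c\HFL^-$ this pins down the free part of $\Leg$ but does \emph{not} exclude a torsion component. Concretely, nothing you have written rules out
\[
\Leg = U^{j}g + t,\qquad j\geq 1,\qquad t\in\text{Tors}\ \text{homogeneous with}\ t\notin U\cdot c\HFL^-.
\]
In that scenario $i^-_*(\Leg)=i^-_*(U^j g)+i^-_*(t)=0+i^-_*(t)\neq 0$, so $\Leghat\neq 0$; meanwhile $\iota_*(\Leg)=\iota_*(U^j g)+0=U^{j}\iota_*(g)$ lies in $U\cdot\iota_*(c\HFL^-)=\ker\pi_*$, so $\Leg^+=0$. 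Your line ``$\Leghat=i^-_*(U^j g)=0$ for $j\geq 1$, hence $j=0$'' silently drops the $t$-term, and your final contradiction argument --- which is correct once one \emph{knows} $\Leg=g$ --- never returns to address it. Equivalently, at the module level you are implicitly assuming that the cyclic submodule $\F[U]\cdot\Leg$ is a direct summand of $c\HFL^-$; over a PID this is not automatic for a non-torsion element (take $M=\F[U]\oplus\F[U]/U$ and $\Leg=Ug+t$), and it is precisely the missing input your proof needs.
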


We now recall the definition of $\tau_\xi(L)$ from \cite{Hedden_xi}: assume that the contact structure $\xi$ on $Y$ is such that $\widehat c(Y,\xi)\neq[0]$, then we say that $\tau_\xi(L)$ is equal to the Alexander grading of the generator of the tower $\mathcal T$ in $c\HFL^-(-Y,L,\mathfrak t_\xi)$ such that $F_*(\mathcal T)=\widehat c(Y,\xi)\otimes \textbf e_{1-n}$; in other words, if we write $\mathcal T\cong\F[U]_{(d,s)}$ then $\tau_\xi(L)=s$. Such an integer is a smooth link invariant, but requires the choice of a specific contact structure on $Y$.

The Alexander grading of the cycle $x(\mathcal L)$ has been computed in \cite{O-S,Cavallo2} from the classical invariants of $\mathcal L$: 
\begin{equation}
 A(x(\mathcal L))=A(\Leg(\mathcal L))=\dfrac{\tb(\mathcal L)-\rot(\mathcal L)+n}{2}\:.
 \label{eq:Alexander}
\end{equation}

Equations \ref{eq:d3} and \ref{eq:Alexander} will be used to determine the 3-dimensional invariants (for instance, for Proposition \ref{prop:taut}). However, in Theorem \ref{thm:main} we also need the following observation.
\begin{prop}
 \label{prop:plus}
 If an $n$-component Legendrian link $\mathcal L$ in $(Y,\xi)$ with $\widehat c(Y,\xi)\neq[0]$ fulfills the equality $\tb(\mathcal L)-\rot(\mathcal L)=2\tau_\xi(L)-n$, then $\Leg^+(\mathcal L)$ is non-zero.
\end{prop}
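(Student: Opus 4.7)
The plan is to reduce the claim to Lemma \ref{lemma:plus} by showing that, under the hypothesis, $\widehat{\Leg}(\mathcal L)\neq 0$; the non-vanishing of $\widehat c(Y,\xi)$ being given, this suffices. The key observation will be that the Alexander grading forces $\Leg(\mathcal L)$ to sit at the very top of the tower to which Lemma \ref{lemma:tower} assigns it.

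First, I would invoke Lemma \ref{lemma:tower}: since $\widehat c(Y,\xi)\neq[0]$, the class $\Leg(\mathcal L)$ is non-torsion and belongs to the $\F[U]$-summand $\mathcal T=F_*^{-1}(\widehat c(Y,\xi)\otimes \textbf e_{1-n})$ of $c\HFL^-(-Y,L,\mathfrak t_\xi)$. Write $\mathcal T\cong \F[U]_{(d,s)}$, so that by the definition of $\tau_\xi(L)$ recalled just before the proposition, we have $s=\tau_\xi(L)$. In particular the unique top Alexander grading in $\mathcal T$ (where the $\F[U]$-generator of $\mathcal T$ lives) is $\tau_\xi(L)$, and $U$-multiplication drops $A$ strictly.

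Next, I would compute $A(\Leg(\mathcal L))$ via Equation \eqref{eq:Alexander}: from the hypothesis $\tb(\mathcal L)-\rot(\mathcal L)=2\tau_\xi(L)-n$ we get
\[
A(\Leg(\mathcal L))=\dfrac{\tb(\mathcal L)-\rot(\mathcal L)+n}{2}=\tau_\xi(L)=s.
\]
Combined with the previous step, $\Leg(\mathcal L)$ is a non-zero class in the tower $\mathcal T$ whose Alexander grading equals that of the tower's generator. Since any element of the form $U^k\cdot(\text{generator})$ with $k\geq 1$ has strictly smaller Alexander grading, $\Leg(\mathcal L)$ must itself be the $\F[U]$-generator of $\mathcal T$, so in particular $\Leg(\mathcal L)\notin U\cdot c\HFL^-(-Y,L,\mathfrak t_\xi)$.

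Finally, the long exact sequence induced by $0\to c\CFL^-\xrightarrow{U} c\CFL^-\to\widehat{\CFL}\to 0$ identifies $\ker i^-_*$ with $U\cdot c\HFL^-$, so the previous step yields $\widehat{\Leg}(\mathcal L)=i^-_*(\Leg(\mathcal L))\neq 0$. Together with the assumption $\widehat c(Y,\xi)\neq [0]$, Lemma \ref{lemma:plus} then gives $\Leg^+(\mathcal L)\neq 0$, as desired. No step looks genuinely hard once Lemmas \ref{lemma:tower} and \ref{lemma:plus} are in hand; the only subtlety worth stating explicitly in the write-up is the argument pinning $\Leg(\mathcal L)$ to the generator of $\mathcal T$ by its Alexander grading, since this is where the hypothesis on $\tb-\rot$ is actually used.
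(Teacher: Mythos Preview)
Your proof is correct and follows essentially the same approach as the paper: use Lemma \ref{lemma:tower} and the hypothesis together with Equation \eqref{eq:Alexander} to pin $\Leg(\mathcal L)$ to the generator of its tower, conclude it is not in the image of $U$ so that $\widehat\Leg(\mathcal L)\neq 0$, and then apply Lemma \ref{lemma:plus}. Your write-up is slightly more explicit about why ``not in the image of $U$'' implies ``not in $\ker i^-_*$'' via the short exact sequence, but otherwise the arguments coincide.
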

\begin{proof}
  We have the condition that $\widehat c(Y,\xi)\neq[0]$; and then $\Leg(\mathcal L)$ is non-torsion. By assumption we can also write \[\dfrac{\tb(\mathcal L)-\rot(\mathcal L)+n}{2}=\tau_\xi(L)\] and we obtain from Equation \eqref{eq:Alexander} that $A(\Leg(\mathcal L))=\tau_\xi(L)$. Lemma \ref{lemma:tower} then implies that $\Leg(\mathcal L)$ is the generator of its tower in $c\HFL^-(-Y,L,\mathfrak t_\xi)$; in fact, since multiplication by $U$ drops the Alexander grading by 1, only a generator of a tower can have maximal Alexander grading. This means that $\Leg(\mathcal L)$ is not in the image of the $U$-action on $c\HFL^-(-Y,L,\mathfrak t_\xi)$ and then it cannot be in the kernel of $i_*^-$; which leads to $\widehat{\Leg}(\mathcal L)\neq[0]$. We conclude by applying Lemma \ref{lemma:plus}.
\end{proof}

\section{The invariant \texorpdfstring{$\Leghat$}{L} and Giroux torsion}
\label{section:three}

\subsection{Sutured Legendrian invariants}\label{ss:sfh}
We briefly review sutured reinterpretation of the knot Floer invariants, due to Stipsicz and V\'ertesi \cite{SV} for $\HFKhat$, Golla \cite{G.i} for $\HFK^-$, and Etnyre, Vela-Vick and Zarev \cite{EVVZ} for $\HFK^+$. 
Such interpretations can be naturally generalized to links and, for this reason, we write the results already in the setting we need.
\begin{convention}
 We parametrize every boundary component of the (null-homologous) Legendrian link complement by the meridian of the corresponding knot taking the slope $\infty$, and the Seifert framing taking the slope $0$. Thus, the boundary slope (of the dividing curves) equals the Thurston-Bennequin number, $\tb(\mathcal L_i)$ along the $i$-th component of $\mathcal L$.
\end{convention}

\subsubsection*{$\EH$-invariants and gluing maps} Honda, Kazez and Mati\'c \cite{HKM.eh} define $\EH(Y,\xi)$ of a contact manifold with convex boundary as a class in $\SFH(-Y,-\Gamma_\xi)$ where $\Gamma_\xi$ consists of dividing curves of $\xi$ on $\partial Y$. The contact invariant $\widehat c(Y,\xi)$ of a closed 3-manifold $Y$ is then identified with $\EH((Y,\xi)\backslash(B^3,\xi_\text{std}))$ in $\SFH(-Y(1),\mathfrak t_\xi)$, and in case of a Legendrian link $\mathcal L$ with link type $L$, the invariant $\EH(\mathcal L)$ is set to be $\EH(Y(L),\xi_{\mathcal L})$ in $\SFH(-(Y\backslash\nu L), -\Gamma_{\tb(\mathcal L)},\mathfrak t_\xi)$ where $\Gamma_{\tb(\mathcal L)}$ is a collection of pairs of oppositely oriented closed curves of slope $\tb(\mathcal L_i)$ on the boundary torus corresponding to the $i$-th component of $\mathcal L$. The crucial property of these sutured contact invariants is their behavior under gluing diffeomorphisms \cite{HKM.glue}: for sutured manifolds $(Y,\Gamma)\subset(Y',\Gamma')$, a contact structure $\zeta$ on $Y'\backslash Y$, compatible with $\Gamma\cup\Gamma'$, induces a map 
\[\Phi_\zeta: \SFH(-Y,-\Gamma) \rightarrow \SFH(-Y',-\Gamma')\]
which in case of contact manifolds with convex boundary connects the $\EH$-classes, that is
\[\Phi_\zeta(\EH(Y,\xi_Y))=\EH(Y',\xi_Y\cup\zeta).\]

Studying Legendrian links, the sutured manifolds, we are specifically interested in, are the link complements with various boundary slopes $(Y(L),\Gamma_{s})$, and the key gluing maps are
\[\sigma^\pm_{s,s'}: \SFH(-Y(L),-\Gamma_{s})\rightarrow \SFH(-Y(L),-\Gamma_{s'})\:,\] where $s=(s_1,...,s_i,...,s_n)$ and $s'=(s_1,...,s_i',...,s_n)$, associated to the addition of a basic slice along the $i$-th component, see \cite{Ho.I}.

\subsubsection*{Invariant $\Leghat$} Stipsicz and V\'ertesi \cite{SV} interpret the Legendrian invariant $\Leghat(\mathcal L)\in\widehat\HFL(-Y,L,\mathfrak t_\xi)$, 
the homology of $\widehat\CFL(-Y,L,\mathfrak t_\xi)$, as the $\EH$-invariant of the complement of a Legendrian link $(Y(L),\xi_{\mathcal L},\Gamma_{\tb(\mathcal L)})$ completed along all components by the negative basic slices with boundary slopes $\tb(\mathcal L_i)$ and $\infty$. Therefore, if we denote the completion $\xi_{\mathcal L}\cup\zeta^-_{\tb(\mathcal L),\infty}$ by $\overline{\xi_\infty}$, we have
\[\Leghat(\mathcal L)=\EH(Y(L),\overline{\xi_\infty}) \in\SFH(-Y(L),-\Gamma_\infty,\mathfrak t_\xi)\:;\] hence, we have a map \[\widehat i_*:\SFH(-Y(L), -\Gamma_{\tb(\mathcal L)},\mathfrak t_\xi)\longrightarrow\widehat\HFL(-Y,L,\mathfrak t_\xi)\cong\SFH(-Y(L),-\Gamma_\infty,\mathfrak t_\xi)\] which sends $\EH(\mathcal L)$ into $\widehat\Leg(\mathcal L)$, implying that if the latter is non-vanishing then also $\EH(\mathcal L)$ is.

\subsubsection*{Invariant $\Leg$} Golla \cite{G.i}, explicitly for knots in the $3$-sphere, and later Etnyre, Vela-Vick and Zarev \cite{EVVZ} give corresponding reinterpretation for the Legendrian invariant $\Leg(\mathcal L)\in c\HFL^-(-Y,L,\mathfrak t_\xi)$. For links we have that $\SFH(-Y(L),\Gamma_{k})$ with $k=(k_1,...,k_n)$ and $k_i\leq\tb(\mathcal L_i)$, together with 
\[\phi^-_{k,j}:\SFH(-Y(L),-\Gamma_{k}) \longrightarrow \SFH(-Y(L),-\Gamma_{j})\hspace{0.5cm}\text{ such that }\hspace{0.5cm}j_i\leq k_i,\]
the compositions of (appropriate) negative basic slice maps $\sigma^-_{s,s'}$ defined above, form a direct system \[\left(\{\SFH(-Y(L),\Gamma_{k})\}_{k\leq\tb(\mathcal L)},\: \{\phi^-_{k,j}\}_{|j|<|k|}\right)\] where $|k|=k_1+...+k_n$ and $|j|=j_1+...+j_n$.
Furthermore, their work shows that its direct limit $\underrightarrow{\SFH}(-Y,L,\mathfrak t_\xi)$ is mapped to $c\HFL^-(-Y,L,\mathfrak t_\xi)$, with $U$-action given by (any of) the maps $\sigma^+_{s,s'}$. One can then observe that the $\EH$-invariants of the negative stabilizations $\mathcal L^{i-}$ of $\mathcal L$ respect this direct system, so that the class $\EHdirect(\mathcal L)$ of the vector $\left(\EH(\mathcal L^{i_1-,...,i_n-})\right)_{|i|\in\N}$ is taken to $\Leg(\mathcal L)$. 

This means that the non-vanishing of $\Leg(\mathcal L)$ implies the non-vanishing of the $\EH$-invariant of $\mathcal L$ and all its negative stabilizations (for knots it is shown that the converse is also true).

\subsubsection*{Invariant $\Leg^+$} From the work of Etnyre, Vela-Vick and Zarev \cite{EVVZ} we can also consider a parallel inverse system with boundary slopes $k_i\geq\tb(\mathcal L_i)$: \[\left(\{\SFH(-Y(L),\Gamma_{k})\}_{k\geq\tb(\mathcal L)}, \{\phi^-_{j,k}\}_{|j|>|k|}\right)\:,\] whose inverse limit $\underleftarrow{\SFH}(-Y,L,\mathfrak t_\xi)$ is mapped to $c\HFL^+(-Y,L,\mathfrak t_\xi)$. They define the inverse limit invariant $\EHinverse(\mathcal L)$ to be the class of the vector $\left(\EH(Y(L),\overline{\xi_{k}})\right)_{|k|\geq\tb(\mathcal L)}$, where $\overline{\xi_k}$ equals the extension by $2n$ negative basic slices $\xi_L\cup\zeta^-_{\tb(\mathcal L),\infty}\cup\zeta^-_{\infty,k}$. Under the above map, the invariant $\EHinverse(\mathcal L)$ is sent to $\Leg^+(\mathcal L)$.

\subsection{Giroux torsion}
\label{subsection:Gtor}
We will refer to a thickened torus contactomorphic to \[\big(T^2\times[0,1],\:\eta_\pi=\Ker(\sin(\pi t)\mbox{d} x+\cos(\pi t)\mbox{d} y)\big)\] as a \emph{$\pi$-torsion layer}. In fact, a contact manifold $(Y, \xi)$ is said to have \emph{Giroux $k\pi$-torsion} along $T$ if there exists a $\pi$-torsion layer which embeds into $(Y, \xi)$, so that each $T^2\times\{t\}$ is isotopic to $T$ and $\eta_{k\pi}$ is obtained by stacking $k$ copies of $\eta_\pi$. The invariant was introduced by Giroux in \cite{Giroux}; it is a custom to refer to the $2\pi$-torsion as  Giroux torsion, and then to $\pi$-torsion as half Giroux torsion.

According to the classification of tight contact structures on a thickened torus, due to Honda, there exist exactly two tight contact structures on $T^2\times I$ with a pair of dividing curves of the same slope $s$ at both boundary tori and with $I$-twisting equal to $\pi$ \cite[Theorem 2.2 (3)]{Ho.I}; they are universally tight \cite[Proposition 2.1]{HKM.tor}. In particular, when we split the $\pi$-torsion layer into basic slices, they all have the same sign, determined by the sign of the embedded basic slice from the boundary slope $s\in\Z$ to $\infty$.

Finally, we notice that if we insert a $\pi$-torsion layer along the boundary $T$ of a tubular neighborhood of a Legendrian knot $K$, then the same result can be obtained by a \emph{Lutz twist} on the transverse push-off of $K$; we say that we are adding half Giroux torsion along a knot. When the knot is null-homologous, this operation does not change the $\spinc$-structure 
and it shifts the $d_3$-invariant by $-\slk(K)$; meanwhile, for the Legendrian knot it preserves the Thurston-Bennequin number and reverses the orientation and the self-linking number of its transverse push-off. See \cite{DGS} for details. While most of this obviously extends to links, we present below a computation for the shift of the $d_3$-invariant when adding half Giroux torsion along a link.

In \cite{DGS} Ding, Geiges and Stipsicz constructed a surgery presentation of the contact manifold obtained by adding half Giroux torsion along a knot in $(Y,\xi)$. 
\begin{figure}[ht]
 \centering
 \def\svgwidth{5cm}
\begingroup%
  \makeatletter%
  \providecommand\color[2][]{%
    \errmessage{(Inkscape) Color is used for the text in Inkscape, but the package 'color.sty' is not loaded}%
    \renewcommand\color[2][]{}%
  }%
  \providecommand\transparent[1]{%
    \errmessage{(Inkscape) Transparency is used (non-zero) for the text in Inkscape, but the package 'transparent.sty' is not loaded}%
    \renewcommand\transparent[1]{}%
  }%
  \providecommand\rotatebox[2]{#2}%
  \newcommand*\fsize{\dimexpr\f@size pt\relax}%
  \newcommand*\lineheight[1]{\fontsize{\fsize}{#1\fsize}\selectfont}%
  \ifx\svgwidth\undefined%
    \setlength{\unitlength}{2421.74097666bp}%
    \ifx\svgscale\undefined%
      \relax%
    \else%
      \setlength{\unitlength}{\unitlength * \real{\svgscale}}%
    \fi%
  \else%
    \setlength{\unitlength}{\svgwidth}%
  \fi%
  \global\let\svgwidth\undefined%
  \global\let\svgscale\undefined%
  \makeatother%
  \begin{picture}(1,0.39866491)%
    \lineheight{1}%
    \setlength\tabcolsep{0pt}%
    \put(0,0){\includegraphics[width=\unitlength,page=1]{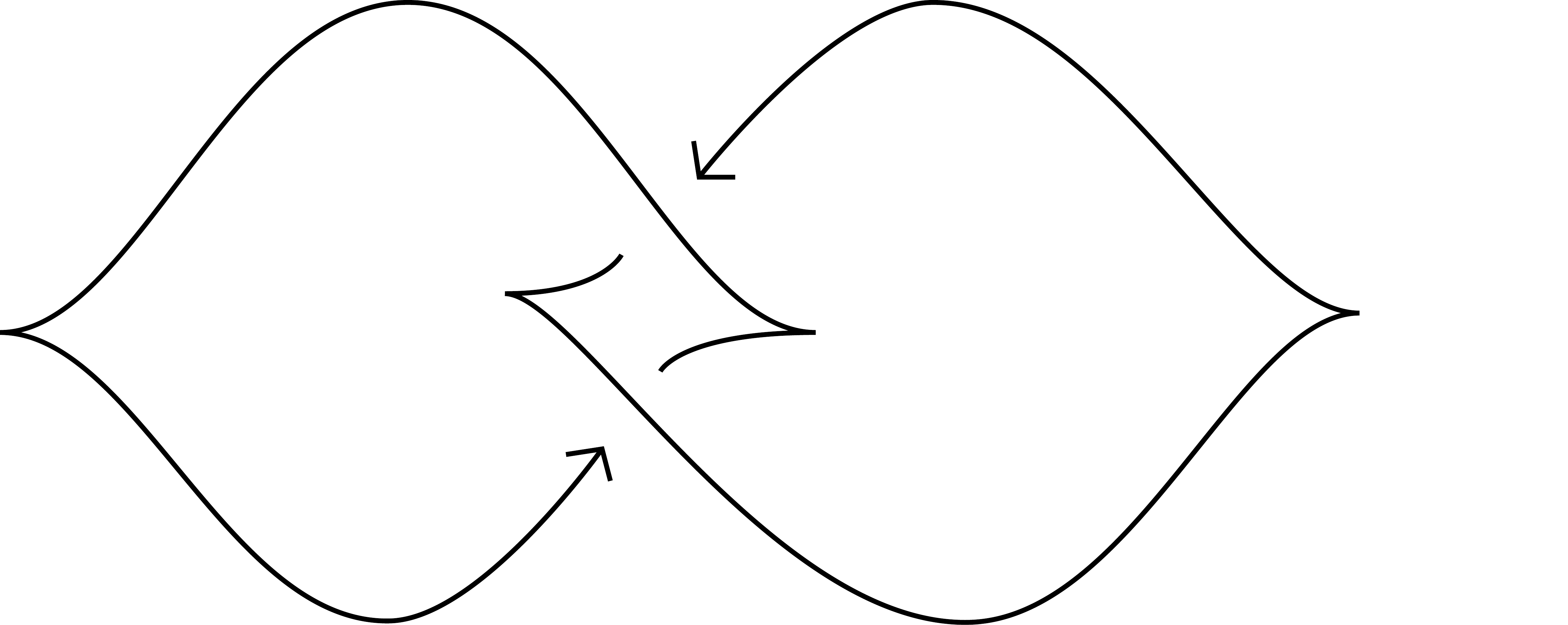}}%
    \put(0.78308478,0.05998923){\color[rgb]{0,0,0}\makebox(0,0)[lt]{\lineheight{80}\smash{\begin{tabular}[t]{l}$\mathcal L$\end{tabular}}}}%
  \end{picture}%
\endgroup%

 \hspace{2cm}
 \def\svgwidth{8cm}
\begingroup%
  \makeatletter%
  \providecommand\color[2][]{%
    \errmessage{(Inkscape) Color is used for the text in Inkscape, but the package 'color.sty' is not loaded}%
    \renewcommand\color[2][]{}%
  }%
  \providecommand\transparent[1]{%
    \errmessage{(Inkscape) Transparency is used (non-zero) for the text in Inkscape, but the package 'transparent.sty' is not loaded}%
    \renewcommand\transparent[1]{}%
  }%
  \providecommand\rotatebox[2]{#2}%
  \newcommand*\fsize{\dimexpr\f@size pt\relax}%
  \newcommand*\lineheight[1]{\fontsize{\fsize}{#1\fsize}\selectfont}%
  \ifx\svgwidth\undefined%
    \setlength{\unitlength}{3234.45885247bp}%
    \ifx\svgscale\undefined%
      \relax%
    \else%
      \setlength{\unitlength}{\unitlength * \real{\svgscale}}%
    \fi%
  \else%
    \setlength{\unitlength}{\svgwidth}%
  \fi%
  \global\let\svgwidth\undefined%
  \global\let\svgscale\undefined%
  \makeatother%
  \begin{picture}(1,0.39789619)%
    \lineheight{1}%
    \setlength\tabcolsep{0pt}%
    \put(0,0){\includegraphics[width=\unitlength,page=1]{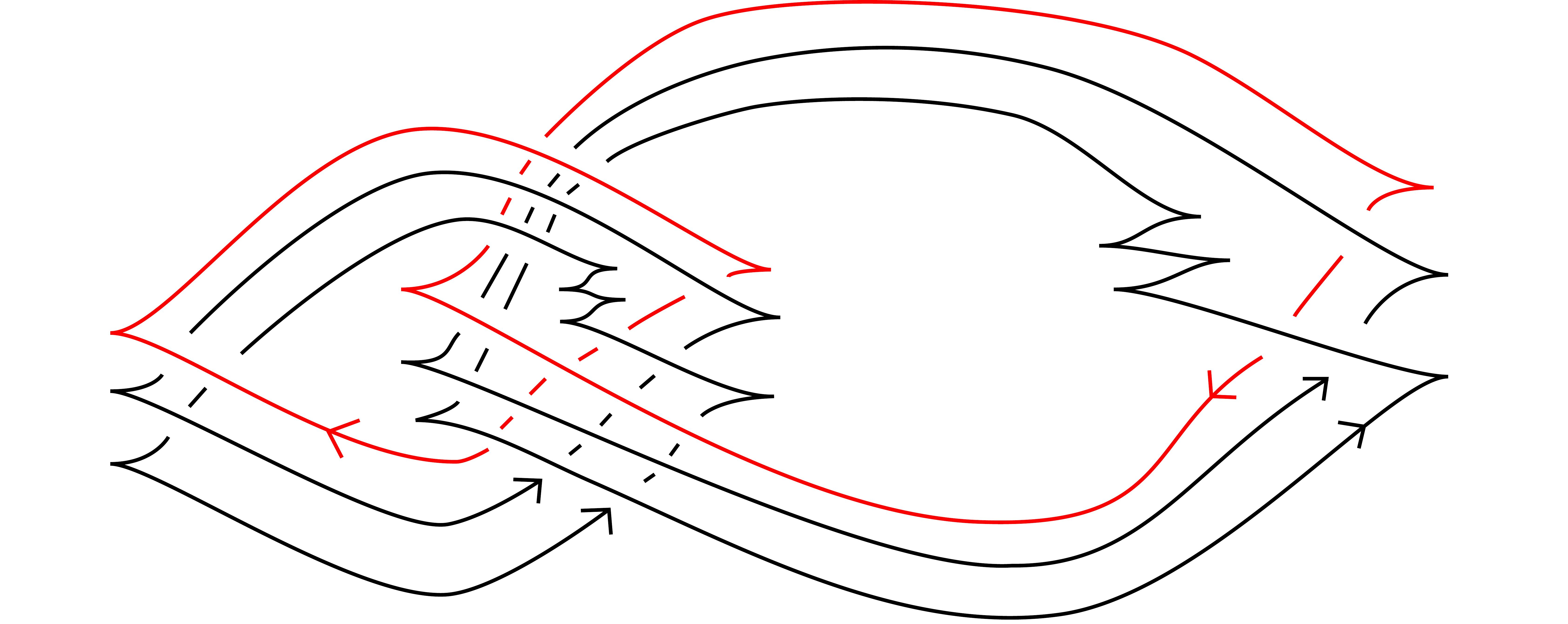}}%
    \put(0.17901453,0.32595429){\color[rgb]{0,0,0}\makebox(0,0)[lt]{\lineheight{80}\smash{\begin{tabular}[t]{l}$\mathcal L'$\end{tabular}}}}%
    \put(-0.00052531,0.08082609){\color[rgb]{0,0,0}\makebox(0,0)[lt]{\lineheight{80}\smash{\begin{tabular}[t]{l}$+1$\end{tabular}}}}%
    \put(-0.00187193,0.12914004){\color[rgb]{0,0,0}\makebox(0,0)[lt]{\lineheight{80}\smash{\begin{tabular}[t]{l}$+1$\end{tabular}}}}%
    \put(0.93425291,0.20599104){\color[rgb]{0,0,0}\makebox(0,0)[lt]{\lineheight{80}\smash{\begin{tabular}[t]{l}$+1$\end{tabular}}}}%
    \put(0.93359038,0.14040268){\color[rgb]{0,0,0}\makebox(0,0)[lt]{\lineheight{80}\smash{\begin{tabular}[t]{l}$+1$\end{tabular}}}}%
  \end{picture}%
\endgroup%

 \caption{A Legendrian link $\mathcal L$ (left) and the link $\mathcal L'$ (in red) after adding $\pi$-torsion along the components of $\mathcal L$ (right). The non-loose link $\mathcal M$, constructed in Theorem \ref{thm:plus}, is a negative destabilization of $\mathcal L'$.}
 \label{Hopf}
\end{figure}
In Figure \ref{Hopf}, we show this construction repeated $n$-times along the transverse push-off of the Legendrian link $\mathcal L$: we use this presentation to compute the change of the $d_3$-invariant.

\begin{prop}
 \label{prop:Hopf}
 Assume that $\xi'$ is the contact structure on $Y$ given by adding half Giroux torsion along (all components of) the Legendrian link $\mathcal L\hookrightarrow(Y,\xi)$; then $d_3(\xi')=d_3(\xi)-\tb(\mathcal L)+\rot(\mathcal L)$.
\end{prop}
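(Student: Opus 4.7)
The plan is to reduce the multi-component statement to the single-component case, which the paragraph preceding the proposition has already recalled from \cite{DGS}: adding half Giroux torsion along a null-homologous Legendrian knot $K$ shifts $d_3$ by $-\slk(K^+)=-\tb(K)+\rot(K)$, where $K^+$ denotes the (positive) transverse push-off obtained via the Lutz twist description. Once this single-knot formula is in hand, the link case will follow by iteration, using the additivity of $\tb$ and $\rot$ on link components.

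First I would arrange $\mathcal L$ so that its components $\mathcal L_1,\ldots,\mathcal L_n$ have pairwise disjoint standard convex tubular neighborhoods $\nu\mathcal L_i$. A $\pi$-torsion layer inserted along $\mathcal L_i$ may be chosen supported in an arbitrarily thin collar of $\partial\nu\mathcal L_i$, so the half-torsion insertions along distinct components have disjoint supports in $Y$. They therefore commute and can be performed sequentially, each insertion leaving the Legendrian data of the remaining components untouched: in particular, $\tb(\mathcal L_j)$ and $\rot(\mathcal L_j)$ are preserved for $j\neq i$, the smooth link type of $\mathcal L$ does not change, and the null-homologous condition on each component persists.

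Applying the single-knot shift at the $i$-th step and summing then gives
\[
d_3(\xi')=d_3(\xi)+\sum_{i=1}^{n}\bigl(-\tb(\mathcal L_i)+\rot(\mathcal L_i)\bigr)=d_3(\xi)-\tb(\mathcal L)+\rot(\mathcal L),
\]
where the last equality uses additivity of $\tb$ and $\rot$ on link components. A small point to check is that the intermediate contact structures all lie in the same $\spinc$-structure $\mathfrak t_\xi$, so that the individual integer shifts can genuinely be added; this is ensured by the fact that each single-component half-torsion insertion along a null-homologous knot leaves the $\spinc$-structure unchanged (as recalled just before the proposition). The main obstacle is really only bookkeeping for the iteration; an alternative route would be to read $d_3$ directly from the $2n$-component contact surgery presentation sketched in Figure \ref{Hopf} by applying the Ding--Geiges--Stipsicz $d_3$-formula to that diagram, but the additivity argument above looks the cleanest.
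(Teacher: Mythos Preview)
Your inductive strategy is the same as the paper's, but there is a genuine gap in the key step. You assert that inserting a $\pi$-torsion layer along $\mathcal L_i$ leaves $\rot(\mathcal L_j)$ unchanged for $j\neq i$. This is false whenever $\lk(L_i,L_j)\neq 0$: the rotation number of $\mathcal L_j$ is computed via a trivialization of $\xi$ over a Seifert surface for $\mathcal L_j$, and any such surface must pass through the tube around $\mathcal L_i$ where the Lutz twist has changed the plane field. The paper carries out the computation explicitly (using the surgery presentation of Figure~\ref{Hopf} and the formulae from \cite{LOSSz}) and finds $\rot(\mathcal L''_n)=\rot(\mathcal L_n)-2\lk(L_n,\widehat L)$, while $\tb(\mathcal L''_n)=\tb(\mathcal L_n)$.

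A second, related slip is your ``additivity of $\tb$'': for a link one has $\tb(\mathcal L)=\sum_i\tb(\mathcal L_i)+2\sum_{i<j}\lk(L_i,L_j)$, not $\sum_i\tb(\mathcal L_i)$. With both errors in place, your sum $\sum_i\bigl(-\tb(\mathcal L_i)+\rot(\mathcal L_i)\bigr)$ differs from the correct $-\tb(\mathcal L)+\rot(\mathcal L)$ by exactly $2\sum_{i<j}\lk(L_i,L_j)$. If instead you track the rotation shift honestly at each step, the accumulated linking corrections supply precisely this missing term and the induction closes; that is essentially what the paper does. Your ``alternative route'' via the surgery diagram is in fact the route the paper takes to make this correction precise.
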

\begin{proof}
 We proceed by induction on $n$, the number of components of $L$ which is the link type of $\mathcal L$.
 Suppose that $n=1$; since $(Y,\xi')$ is obtained from $(Y,\xi)$ by a Lutz twist along the transverse push-off $\mathcal T$ of $\mathcal L$, the Hopf invariants of the two contact structures differ by its self-linking number, as shown in \cite{DGS}. Therefore, we are done because $\slk(\mathcal T)=\tb(\mathcal L)-\rot(\mathcal L)$.

 Assume now that the result is true for $(n-1)$-component links; let us denote by $\widehat{\mathcal L}$ the link $\mathcal L$ without its last component $\mathcal L_n$, and by $\widehat\xi$ the contact structure obtained after applying a Lutz twist along the transverse push-off of $\widehat{\mathcal L}$.
 The previous result for knots implies that \[d_3(Y,\xi')=d_3(Y,\widehat\xi)-\tb(\mathcal L''_n)+\rot(\mathcal L''_n)\:,\] where $\mathcal L''_n$ is the knot $\mathcal L_n$ seen inside $(Y,\widehat\xi)$. By the inductive step we obtain \[d_3(Y,\xi')=d_3(Y,\xi)-\tb(\widehat{\mathcal L})-\tb(\mathcal L''_n)+\rot(\widehat{\mathcal L})+\rot(\mathcal L''_n)\:;\] hence, we only need to compute the invariants of $\mathcal L''_n$ using the formulae in \cite{LOSSz}:\[\tb(\mathcal L''_n)=\tb(\mathcal L_n)+\dfrac{\det\left(M(0)\right)}{\det M}\hspace{0.5cm}\text{ and }\hspace{0.5cm}\rot(\mathcal L''_n)=\rot(\mathcal L_n)-\left\langle\begin{pmatrix}
  \ell_{1,n}\\
  \ell_{1,n}\\
  \vdots\\
  \ell_{n-1,n}\\
  \ell_{n-1,n}
\end{pmatrix},\:M^{-1}\cdot\begin{pmatrix}
  r_1\\
  r_1-2\\
  \vdots\\
  r_{n-1}\\
  r_{n-1}-2
\end{pmatrix}\right\rangle\:,\] where $\ell_{i,j}:=\lk(L_i,L_j)\:,t_i:=\tb(\mathcal L_i)\:,r_i:=\rot(\mathcal L_i)$ and $M$ and $M(0)$ are the following matrices \[M=\left(\begin{array}{@{}c|c@{}|c|c@{}}
  \begin{matrix}
  t_1+1 & t_1  \\
  t_1 & t_1-1
  \end{matrix}
  & 
  \begin{matrix}
  \ell_{1,2} & \ell_{1,2}  \\
  \ell_{1,2} & \ell_{1,2}
  \end{matrix}
  & \vdots & \vdots \\
\hline
   \begin{matrix}
  \ell_{1,2} & \ell_{1,2}  \\
  \ell_{1,2} & \ell_{1,2}
  \end{matrix}
   &
  \begin{matrix}
   t_2+1 & t_2  \\
   t_2 & t_2-1
  \end{matrix}
  & \vdots & \vdots \\ \hline \cdots & \cdots & \ddots & \vdots \\ \hline
  \cdots & \cdots & \cdots &
  \begin{matrix}
  t_{n-1}+1 & t_{n-1}  \\
  t_{n-1} & t_{n-1}-1
  \end{matrix}
\end{array}\right)\] and \[M(0)=\left(\begin{array}{@{}c|c@{}}
  M
  & \begin{matrix}
  \ell_{1,n}\\
  \ell_{1,n}\\
  \vdots\\
  \ell_{n-1,n}\\
  \ell_{n-1,n}
 \end{matrix} \\
 \hline
  \ell_{1,n}\:\:\ell_{1,n}\cdots\ell_{n-1,n}\:\:\ell_{n-1,n} & 0
 \end{array}\right)\:.\]
 We start by observing that \[\faktor{\Z}{|\det(M(0))|\cdot\Z}\cong H_1(Y_{(t_1+1,...,t_{n-1}-1,0)}(L);\Z)\cong H_1(Y_0(L_n);\Z)\cong\Z\] because $L$ is null-homologous, which more specifically means that $L_n$ is null-homologous, and the surgery on $\widehat L$ does not change the link type. Therefore, we get $\det(M(0))=0$ and then $\tb(\mathcal L''_n)=\tb(\mathcal L_n)$. We continue with the following computation: \[M\cdot\left[\begin{pmatrix}
  r_1-2t_1\\
  2-r_1+2t_1\\
  \vdots\\
  r_{n-1}-2t_{n-1}\\
  2-r_{n-1}+2t_{n-1}
 \end{pmatrix}-2\begin{pmatrix}
  \lk(L_n,\widehat L)\\
  -\lk(L_n,\widehat L)\\
  \vdots\\
  \lk(L_n,\widehat L)\\
  -\lk(L_n,\widehat L)
 \end{pmatrix}\right]=\begin{pmatrix}
  r_1\\
  r_1-2\\
  \vdots\\
  r_{n-1}\\
  r_{n-1}-2
 \end{pmatrix}\] and substituting in the formula for the rotation number ($\det(M)\neq 0$ since $Y$ is a rational homology sphere) gives \[\rot(\mathcal L''_n)=\rot(\mathcal L_n)-\left(\ell_{1,n}\:\:\ell_{1,n}\cdots\ell_{n-1,n}\:\:\ell_{n-1,n}\right)\cdot\begin{pmatrix}
  r_1-2t_1\\
  2-r_1+2t_1\\
  \vdots\\
  r_{n-1}-2t_{n-1}\\
  2-r_{n-1}+2t_{n-1}
 \end{pmatrix}+\] \[+2\left(\ell_{1,n}\:\:\ell_{1,n}\cdots\ell_{n-1,n}\:\:\ell_{n-1,n}\right)\cdot\begin{pmatrix}
  \lk(L_n,\widehat L)\\
  -\lk(L_n,\widehat L)\\
  \vdots\\
  \lk(L_n,\widehat L)\\
  -\lk(L_n,\widehat L)
 \end{pmatrix}=\rot(\mathcal L_n)-2\lk(L_n,\widehat L)\:.\]
 We conclude that \[d_3(Y,\xi')=d_3(Y,\xi)-\tb(\widehat{\mathcal L})-\tb(\mathcal L_n)+\rot(\widehat{\mathcal L})+\rot(\mathcal L_n)-2\lk(L_n,\widehat L)=\] \[=d_3(Y,\xi)-\tb(\widehat{\mathcal L})-\tb_n(\mathcal L)+\rot(\mathcal L)=d_3(Y,\xi)-\tb(\mathcal L)+\rot(\mathcal L)\:.\]
\end{proof}

\subsection{\texorpdfstring{$\EH$}{EH}-invariant and taut foliations}\label{ss:taut}
From the work of Eliashberg and Thurston \cite{ET}, it is well known that the contact structures on a closed 3-manifold, which arise as perturbations of taut foliations, are always symplectically semi-fillable and universally tight. Therefore, by Ozsv\'ath and Szab\'o \cite{OSz}, their contact invariant $c(Y,\xi)\in\HFhat(Y; \F[H_2(Y;\Z)])$ is non-zero. 

Furthermore, it was proved by Etnyre and Vela-Vick \cite{VV,EVV} that the $\EH$-invariant (in fact, $\Leghat$) of the contact perturbation of the open book foliation in the complement of the binding is also non-zero.

The relationship between contact invariants and sutured manifold hierarchies is the central focus in \cite[Section 6]{HKM.eh} of Honda, Kazez and Mati\'c, from where the following general statement can be extracted.

\begin{thm}\label{thm:EH(taut)}
If a contact structure $\xi$ on an (irreducible) sutured manifold $(Y,\Gamma)$ arises from a taut sutured manifold hierarchy, as a contact perturbation of the corresponding taut foliation, then it has non-zero $\EH$-invariant. 
\end{thm}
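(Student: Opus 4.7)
The plan is to induct on the length of a taut sutured manifold hierarchy $(Y,\Gamma) = (Y_0,\Gamma_0) \leadsto \cdots \leadsto (Y_N,\Gamma_N)$, provided by Gabai's theorem and chosen so that each decomposition is along a well-groomed surface $S_i$, ending in a disjoint union of $3$-balls with a single disk-like suture. Write $\xi_i$ for the Eliashberg-Thurston contact perturbation of the taut foliation on $(Y_i,\Gamma_i)$ produced (inductively) by the hierarchy. Throughout, I would apply the gluing property of the $\EH$-invariant from \cite{HKM.glue}: whenever $(Y_{i+1},\Gamma_{i+1})\subset (Y_i,\Gamma_i)$ and the complementary contact structure is $\zeta_{S_i}$, the associated map $\Phi_{\zeta_{S_i}}\colon\SFH(-Y_{i+1},-\Gamma_{i+1})\to\SFH(-Y_i,-\Gamma_i)$ satisfies $\Phi_{\zeta_{S_i}}(\EH(Y_{i+1},\xi_{i+1})) = \EH(Y_i,\xi_i)$.

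For the base case, the perturbed horizontal foliation on each terminal $3$-ball is the unique tight structure $(B^3,\xi_{\text{std}})$, whose $\EH$-invariant is the generator of $\SFH(-B^3,-\gamma)\cong\F$; hence $\EH(Y_N,\xi_N)\neq 0$. For the inductive step, the complement $Y_i\setminus Y_{i+1}$ is a regular neighborhood $N(S_i)\cong S_i\times I$ on which the taut foliation is a product, and its Eliashberg-Thurston perturbation gives a universally tight contact structure $\zeta_{S_i}$ compatible with $\Gamma_i$ and $\Gamma_{i+1}$, so that $\xi_i=\xi_{i+1}\cup\zeta_{S_i}$ after perturbation. Applying the gluing formula iteratively from $i=N$ down to $i=0$ transports the $\EH$-class from the balls back to $(Y,\Gamma)$, giving $\EH(Y,\xi)\neq 0$ as soon as one knows that at each step the class is not killed.

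The hard part is precisely this non-vanishing after gluing, namely showing that $\Phi_{\zeta_{S_i}}$ does not send $\EH(Y_{i+1},\xi_{i+1})$ to zero. This is the technical core of Section~6 of \cite{HKM.eh}; the well-groomed choice of $S_i$ (guaranteed by Gabai) is essential, as it permits one to work with a partial open book adapted to the decomposition in which the $\EH$-class is represented by an explicit intersection point, and to trace how the unique extension across $N(S_i)$ survives as a cycle in the glued-up complex. Granting this input, combining the base case with the $N$-fold iteration of the inductive step yields the non-vanishing of $\EH(Y,\xi)$ claimed in the theorem.
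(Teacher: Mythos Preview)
Your proposal is correct and follows essentially the same approach as the paper: both run Gabai's well-groomed hierarchy, invoke the Honda--Kazez--Mati\'c result that the gluing maps associated to well-groomed (equivalently, $\partial$-parallel convex) decompositions are injective, and conclude by transporting a non-zero $\EH$-class from the terminal piece back to $(Y,\Gamma)$. The only cosmetic difference is where you stop the hierarchy: you terminate at balls, whereas the paper stops one step earlier at a product sutured manifold $(\Sigma\times I,\partial\Sigma)$ and argues directly (via Etnyre--Vela-Vick) that its unique tight structure has non-zero $\EH$; either endpoint works, and the paper is slightly more explicit in naming the injectivity of the gluing maps as the key input rather than leaving it as ``the technical core of Section~6.''
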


\begin{proof}
Following Gabai \cite{Gabai}, such a taut foliation is constructed through a well-groomed sutured manifold hierarchy, which ends in a product.  

A contact structure which is a perturbation of the product foliation has non-zero $\EH$-invariant. Indeed, we can use the same argument as Etnyre and Vela-Vick in \cite{EVV}: a product sutured manifold $(\Sigma\times I, \partial\Sigma)$ admits exactly one tight contact structure. It is the same contact manifold as the handlebody used in Heegaard splitting associated to open book decompositions. Hence (as we can always complete the manifold with positive monodromy), its $\EH$-invariant is non-zero.

In addition, Honda, Kazez and Mati\'c \cite{HKM.tor, HKM.eh} associated to well-groomed sutured decomposition a convex decomposition (with cutting surfaces having boundary parallel dividing sets), and showed that gluing maps are injective in this case.

This finishes the proof, as the $\EH$-invariant of the product is through hierarchy injectively mapped to $\EH(\xi)$.
\end{proof}

Theorem \ref{thm:taut} and Proposition \ref{prop:taut}  are then proved as follows.

\begin{proof}[Proof of Theorem \ref{thm:taut}]
According to Theorem \ref{thm:EH(taut)}, the invariant $\EH(Y(L),\Gamma_\infty,\xi_L)$ is non-zero.  
Now, if we isotope the boundary torus over the bypass disk (defined by a boundary parallel dividing curve on the Seifert surface), the trace of this isotopy is a basic slice with slopes $\infty$ and $0$. Hence, we have an embedded Legendrian knot complement $(Y(L),\Gamma_0)=\mathcal L$, and the described trace of isotopy is the completing basic slice; therefore $\Leghat(\mathcal L)=\EH(Y(L),\Gamma_\infty,\xi_L)$.

Since the (Thurston norm minimizing) Seifert surface is a leaf of the taut foliation which perturbs into $\xi_L$, it contain no dividing curve in $(Y(L),\Gamma_0)$ and we have $\tb(\mathcal L)-\rot(\mathcal L)=||L||_T$; the Alexander grading is then computed by Equation \ref{eq:Alexander}.
\end{proof}

\begin{proof}[Proof of Proposition \ref{prop:taut}]
Assumptions ensure that $(Y(L), \Gamma_\infty)$ is a taut sutured manifold and hence there exists $\mathcal L$ as in Theorem \ref{thm:taut}. 
Starting from the fact that \[||L||_T=\tb(\mathcal L)-\rot(\mathcal L)=2A(\widehat{\mathfrak L}(\mathcal L))-n\:,\] we obtain by Equation \ref{eq:d3} that $\mathcal L$ lives in \[\begin{aligned}d_3(\xi_0)=2A(\widehat{\mathfrak L}(\mathcal L))-M(\widehat{\mathfrak L}(\mathcal L))+1-n=1+(2A(\widehat{\mathfrak L}(\mathcal L))-n)-&M(\widehat{\mathfrak L}(\mathcal L))= \\ 
&=1+||L||_T-M(\widehat{\mathfrak L}(\mathcal L))\:.\end{aligned}\]

As for \cite[Theorem 1.8]{E} the two families of non-loose links are obtained by adding boundary parallel $k\pi$-torsion. For even $k$ the 3-dimensional invariant of the ambient equals $d_3(\xi)=d_3(\xi_0)$, for odd $k$ we get from Proposition \ref{prop:Hopf} that $d_3(\xi')=d_3(\xi_0)-||L||_T$.
\end{proof}
\begin{proof}[Proof of Corollary \ref{cor:d3}]
 We only write the proof for $d_3(\xi)$.
 Since the link has thickness equal to $\text{th}(L)$, we obtain that $\widehat\HFL(L)$ is supported in $\text{th}(L)+1$ lines in the $(M,\:A)$-plane such that $A=M+k$ and $|k-\tau(L)|\leq\text{th}(L)$. 
 This in particular holds for $\widehat{\mathfrak L}(\mathcal L)\in\HFLhat(L^*)$ and, since $\tau(L^*)=-\tau^*(L)$ from \cite{Cavallo}, one has \[-M(\widehat{\mathfrak L}(\mathcal L))=-A(\widehat{\mathfrak L}(\mathcal L))+k=-\dfrac{n+||L||_T}{2}+k\leq-\dfrac{n+||L||_T}{2}-\tau^*(L)+\text{th}(L)\] and
 \[-M(\widehat{\mathfrak L}(\mathcal L))=-A(\widehat{\mathfrak L}(\mathcal L))+k=-\dfrac{n+||L||_T}{2}+k\geq-\dfrac{n+||L||_T}{2}-\tau^*(L)-\text{th}(L)\]
 which then imply the claim by substituting the values of $M(\widehat{\mathfrak L}(\mathcal L))$ in the previous formula.

 Now if we assume that $L$ is $\HFLhat$-thin then $\text{th}(L)=0$ and, according to \cite{Cavallo}, one has $\tau^*(L)=-\frac{n-1+\sigma(L)}{2}$. 
\end{proof}

\subsection{\texorpdfstring{$\Leg$}{Leg} versus \texorpdfstring{$\Leghat$}{Leg hat} versus \texorpdfstring{$\Leg^+$}{Leg +}}\label{ss:^vs-}
From the sutured interpretation of Legendrian invariants in link Floer homologies (Subsection \ref{ss:sfh}), it is clear that they are associated to different contact structures on the link complement. However, it is not immediately clear in what way the links with non-vanishing $\Leg^+$ or $\Leghat$ behave differently from the ones with non-vanishing only $\Leg$. For example, Legendrian surgery does not seem to distinguish between them \cite[Theorem 1.1]{M.s}.
One way in which knots with non-zero $\Leg^+$ are special is when considering positive contact surgeries: the resulting contact manifold will have non-zero invariant only if $\Leg^+$ of the surgered knot is non-zero \cite[Theorem 1.7]{M.s}. This does not suffice though, as observed by Golla in \cite[Theorem 1.1]{G.s}. Here, we notice another property of links with non-zero $\Leg^+$.

\begin{thm}\label{thm:plus}
If a link $L\hookrightarrow Y$ admits a Legendrian realization $\mathcal L$ in $(Y,\xi)$ with non-zero $\Leg^+(\mathcal L)\in c\HFL^+(-Y,L,\mathfrak t_\xi)$, then there exists another Legendrian representative $\mathcal M$ of $L$, in an overtwisted structure on $Y$, such that $\EH(\mathcal M)\neq[0]$. In particular, the link $\mathcal M$ is non-loose.
\end{thm}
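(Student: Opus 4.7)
The plan adapts the strategy of Theorem \ref{thm:main} to the stronger hypothesis, using the inverse-limit sutured reinterpretation of $\Leg^+$ from Subsection \ref{ss:sfh}. The overall idea is to exhibit a half Giroux torsion layer sitting in a contact extension of the complement of $\mathcal L$ with non-zero $\EH$, re-close the complement by a standard tight neighborhood, and then read off $\mathcal M$.

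First, by Lemma \ref{lemma:plus}, the hypothesis $\Leg^+(\mathcal L)\neq[0]$ forces $\widehat c(Y,\xi)\neq[0]$ and $\Leghat(\mathcal L)\neq[0]$; in particular, $(Y,\xi)$ is tight. Recall that $\Leg^+(\mathcal L)$ is the image in $c\HFL^+(-Y,L,\mathfrak t_\xi)$ of the inverse-limit class
\[\EHinverse(\mathcal L) = \big(\EH(Y(L),\overline{\xi_k})\big)_{k\geq \tb(\mathcal L)}, \qquad \overline{\xi_k} = \xi_{\mathcal L}\cup\zeta^-_{\tb(\mathcal L),\infty}\cup\zeta^-_{\infty,k}.\]
Since $\EHinverse(\mathcal L)$ is non-zero in the limit, there is some $k_0 \geq \tb(\mathcal L)$ with $\EH(Y(L),\overline{\xi_{k_0}})\neq[0]$. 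The concatenation $\zeta^-_{\tb(\mathcal L),\infty}\cup\zeta^-_{\infty,k_0}$ goes from slope $\tb(\mathcal L_i)$ through $\infty$ back to an integer slope $(k_0)_i$, hence has $I$-twisting equal to $\pi$ on each boundary component, and by Honda's classification it contains an embedded half Giroux torsion layer parallel to $\partial Y(L)$.

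Next, attach to $(Y(L),\overline{\xi_{k_0}})$ the standard tight solid-torus neighborhood with boundary slope $(k_0)_i$ on each component. This produces a closed contact manifold $(Y,\xi')$ containing a Legendrian representative $\mathcal M$ of $L$ with $\tb(\mathcal M_i)=(k_0)_i$, whose complement is exactly $(Y(L),\overline{\xi_{k_0}})$. Because this complement carries a half Giroux torsion layer parallel to each boundary torus, the Ding-Geiges-Stipsicz correspondence between adding boundary-parallel $\pi$-torsion and a Lutz twist on the transverse push-off (equivalently, Proposition \ref{prop:Hopf}) shows $\xi'$ is overtwisted. By the Honda-Kazez-Mati\'c gluing theorem \cite{HKM.glue}, naturality of $\EH$ under gluing gives
\[\EH(\mathcal M) = \EH(Y(L),\overline{\xi_{k_0}}) \neq [0],\]
so $\mathcal M$ is non-loose. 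Figure \ref{Hopf} records a surgery presentation of this same $\mathcal M$ as a negative destabilization of an explicit Legendrian $\mathcal L'$.

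The main obstacle is the sutured bookkeeping: one needs to confirm that some component of $\EHinverse(\mathcal L)$ at a specific $k_0\geq\tb(\mathcal L)$ is non-zero (immediate from the definition of an inverse-limit class), and that the gluing map induced by the tight solid-torus extension takes $\EH(Y(L),\overline{\xi_{k_0}})$ to $\EH(\mathcal M)$ without killing it (which is exactly the content of \cite{HKM.glue}). Translating back to the closed ambient then relies on the Lutz-twist interpretation of half Giroux torsion recalled in Subsection \ref{subsection:Gtor}, which forces $\xi'$ to be overtwisted.
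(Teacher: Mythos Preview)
Your approach is essentially the paper's: build $\mathcal M$ as the Legendrian whose complement is $(Y(L),\overline{\xi_{k_0}})$ for some $k_0$ with $\EH(Y(L),\overline{\xi_{k_0}})\neq[0]$, and argue its ambient structure is overtwisted. However, one step is incorrect as written.

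The claim that the layer $\zeta^-_{\tb(\mathcal L),\infty}\cup\zeta^-_{\infty,k_0}$ has $I$-twisting equal to $\pi$, and hence contains an embedded half Giroux torsion layer, is false whenever $(k_0)_i>\tb(\mathcal L_i)$. Going from slope $\tb(\mathcal L_i)$ through $\infty$ to an integer slope $(k_0)_i>\tb(\mathcal L_i)$ gives strictly less than $\pi$ of twisting; only when the two boundary slopes agree do you get a genuine $\pi$-torsion layer. Consequently, your appeal to the Ding--Geiges--Stipsicz Lutz-twist correspondence and Proposition~\ref{prop:Hopf} does not apply directly to $\mathcal M$.

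The conclusion that the ambient is overtwisted is nonetheless correct, and the paper's argument shows how to fix this: inside your layer there is a convex torus of dividing slope $\infty$, and after re-closing by the solid torus (meridional slope $\infty$) its Legendrian divides bound overtwisted disks. Equivalently, one first glues the \emph{full} $\pi$-torsion layer (boundary slopes $\tb(\mathcal L_i)$ on both sides) to obtain a Legendrian $\mathcal L'$ in a structure $\xi'$ which is overtwisted by the Lutz-twist interpretation, and then observes that $\mathcal M$ is a negative destabilization of $\mathcal L'$, hence lives in the same $(Y,\xi')$. This is what Figure~\ref{Hopf} and the paper's proof record; you mention this relationship at the end but do not use it to justify overtwistedness.

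A minor point: the invocation of the gluing theorem to conclude $\EH(\mathcal M)=\EH(Y(L),\overline{\xi_{k_0}})$ is unnecessary, since by definition $\EH(\mathcal M)$ \emph{is} the $\EH$-class of the complement $(Y(L),\overline{\xi_{k_0}})$. Also, the preliminary appeal to Lemma~\ref{lemma:plus} (to get $\widehat c(Y,\xi)\neq[0]$) is correct but not needed for the argument.
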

\begin{proof}
Take the standard Legendrian link complement for $\mathcal L$ in $(Y,\xi)$ and extend it along all boundary components, by a $\pi$-torsion layer of (both) boundary slopes $\tb(\mathcal L_i)$ along the $i$-th component of $\mathcal L$. If we close the resulting link complement, name it $\mathcal L'$, by the unique solid tori with meridional slope $\infty$ and boundary slope $\tb(\mathcal L_i)$, we get an overtwisted structure $\xi'$ on $Y$. Indeed, smoothly we have not changed the manifold and the resulting structure is overtwisted because in (each) $\pi$-torsion layer there is a convex torus with dividing curves, and hence Legendrian divides, of slope $\infty$, the latter bounding overtwisted disk in the Dehn filling. 
In fact, the manifold $(Y,\xi')$ is obtained by a Lutz twist along the transverse push-off of $\mathcal L$.

On the other hand, the boundary parallel tori in the $\pi$-torsion layer take all possible slopes, and the signs of all basic slices in any of its decompositions agree; we choose them to be negative. 
In particular, we can then decompose the $\pi$-torsion layers in $(Y,\xi')\backslash \nu\mathcal L'$ into negative basic slices, along $\mathcal L_i$ with boundary slopes $\tb(\mathcal L_i),\infty, m_i, m_i-1, \cdots, \tb(\mathcal L_i)+1,\tb(\mathcal L_i)$:
$$\zeta^-_{\tb(\mathcal L_i),\infty}\cup\zeta^-_{\infty,m_i}\cup\zeta^-_{m_i,m_i-1}\cup\cdots\cup\zeta^-_{\tb(\mathcal L_i)+1,\tb(\mathcal L_i)},$$ where $m=(m_1,...,m_n)$ is such that for each $k$ satisfying $k_i\geq m_i$ the sutured invariant $\EH(Y(L),\overline{\xi_{k}})$ is non-zero; such $m$ exists because we assumed that $\Leg^+(\mathcal L)\neq [0]$. Then, the sutured manifold $(Y(L),\overline{\xi_m})$ is the complement of a Legendrian link, name it $\mathcal M$, which is tight (it has non-zero $\EH$-invariant) and which embeds into $(Y,\xi')\backslash \nu\mathcal L'$. 
Moreover, the two Legendrian links $\mathcal M$ and $\mathcal L'$ differ by the stacks of negative basic slices $\zeta^-_{m_i,m_i-1}\cup\zeta^-_{m_i-1,m_i-2}\cup\cdots\cup\zeta^-_{\tb(\mathcal L_i)+1,\tb(\mathcal L_i)}$; in other words, $\mathcal L'$ is obtained from $\mathcal M$ by applying $|m|-\tb(\mathcal L_1)-...-\tb(\mathcal L_n)$ negative stabilizations.

In summary, the Legendrian link $\mathcal M$ is clearly in the link type $L\hookrightarrow Y$. It is non-loose, because the $\EH$-invariant of its complement is non-zero, $\EH(\mathcal M)=\EH(Y(L),\overline{\xi_m})\neq [0]$, but it stabilizes to $\mathcal L'$ which has boundary parallel half Giroux torsion and thus closes to an overtwisted structure $(Y,\xi')$. 
\end{proof}

This result also concludes the proof of Theorem \ref{thm:main}.
\begin{proof}[Proof of Theorem \ref{thm:main}]
 Theorem follows by combining Propositions \ref{prop:plus} and \ref{prop:Hopf} and Theorem \ref{thm:plus}.
\end{proof}

\begin{rmk}\label{rmk:sub}
 Since a submanifold of a tight manifold is tight, we notice that along with the non-loose Legendrian link $\mathcal M$ we get a family of non-loose Legendrian links $\{\mathcal M_i\}_{i=1,...,2^{n}}$, built from $\mathcal L$ by attaching a pair of negative basic slices $\zeta^-_{\tb(\mathcal L_i),\infty}\cup\zeta^-_{\infty,m_i}$ only along some components. These realizations are not related by stabilizations to each other; in general, they will even live in different contact structures on $Y$ (each in the one where the Lutz twist is performed only along the chosen components).
\end{rmk}

Whenever $\Leg^+(\mathcal L)$ vanishes, any link $\mathcal M$ constructed as above will have vanishing $\EH(\mathcal M)$.
However, we expect that when $\Leghat(\mathcal L)$ is non-zero, the link $\mathcal M$ (for large enough $m$) is still non-loose (even if $\widehat c(Y,\xi)=[0]$). The case of taut foliations (from Subsection \ref{ss:taut}) is a particular such case where conjecture is confirmed.
\begin{conj}\label{conj:hat}
 If an $n$-component link $L\hookrightarrow Y$ admits a Legendrian realization $\mathcal L$ in $(Y,\xi)$ with non-zero $\Leghat(\mathcal L)$, then there exists a non-loose realization $\mathcal M$ of $L$ in an overtwisted structure $\eta$ on $Y$ with $\mathfrak t_\eta=\mathfrak t_\xi\in\spinc(Y)$ and $d_3(\eta)=d_3(\xi)-2A(\Leghat(\mathcal L))+n$.
\end{conj}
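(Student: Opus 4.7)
The plan is to mirror the proof of Theorem~\ref{thm:plus} as closely as possible, now relaxing both the hypothesis (from $\Leg^+(\mathcal L)\neq[0]$ to $\Leghat(\mathcal L)\neq[0]$) and the conclusion (from $\EH(\mathcal M)\neq[0]$ to mere tightness of $\xi_{\mathcal M}$). First, I would set $\eta$ to be the result of a Lutz twist of $\xi$ along the transverse push-off of $\mathcal L$; Proposition~\ref{prop:Hopf} together with Equation~\eqref{eq:Alexander} then immediately yields the stated value $d_3(\eta)=d_3(\xi)-(\tb(\mathcal L)-\rot(\mathcal L))=d_3(\xi)-2A(\Leghat(\mathcal L))+n$, and by construction $\mathfrak t_\eta=\mathfrak t_\xi$. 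As in Theorem~\ref{thm:plus}, I then decompose the inserted $\pi$-torsion layer along $\mathcal L$ into negative basic slices traversing the slopes $\tb(\mathcal L)\to\infty\to m\to m-1\to\cdots\to\tb(\mathcal L)+1\to\tb(\mathcal L)$ for a multi-slope $m=(m_1,\ldots,m_n)$ with each $m_i$ large, and let $\mathcal M$ be the Legendrian representative whose standard neighborhood is bounded by the slope-$m$ torus in this decomposition. Its sutured complement at boundary slope $m$ is then $\overline{\xi_\infty}\cup\zeta^-_{\infty,m}$, with completing basic slice $\zeta^-_{m,\infty}$.

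The actual work is to show that this complement is tight for sufficiently large $m$, without the help of a non-vanishing $\EH$-invariant (which may well vanish under $\Phi_{\zeta^-_{\infty,m}}$). By the sutured interpretation recalled in Subsection~\ref{ss:sfh}, the structure $\overline{\xi_\infty}$ is tight since $\EH(\overline{\xi_\infty})=\Leghat(\mathcal L)\neq[0]$, and the attachment of $\zeta^-_{\infty,m}$ is a single negative bypass along a boundary torus whose dividing slope is adjacent (in the Farey graph) to $m$. Honda's classification guarantees that such an attachment either preserves tightness or forces the appearance of Giroux $2\pi$-torsion in the resulting manifold, so the proof reduces to ruling out the latter for every large $m$.

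I expect this last step to be the main obstacle, as tightness is not preserved under basic slice attachments in general, even when $\EH$ of the starting structure is non-vanishing. In the taut-foliation setting of Subsection~\ref{ss:taut} the difficulty evaporates because $\overline{\xi_\infty}$ is universally tight and universal tightness survives each basic slice attachment. For a general $\mathcal L$ with $\Leghat(\mathcal L)\neq[0]$, universal tightness is not automatic. I would attempt two complementary strategies. The first is by contradiction: assuming $\overline{\xi_\infty}\cup\zeta^-_{\infty,m}$ carries Giroux $2\pi$-torsion for every large $m$, push this through the direct system of Honda-Kazez-Mati\'c gluing maps starting from the non-vanishing class $\Leghat(\mathcal L)\in\SFH(-Y(L),-\Gamma_\infty)$ and hope to extract an obstruction that contradicts non-vanishing. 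The second is to upgrade Theorem~\ref{thm:EH(taut)} by producing, from $\Leghat(\mathcal L)\neq[0]$, a sutured hierarchy on the $\infty$-completion whose ``well-groomed'' pieces remain tight after the extra bypass; the evidence of Proposition~\ref{prop:T(p,-q)}, where for negative torus knots the presence of half Giroux torsion in the complement is controlled precisely by $\Leghat$, suggests that this hierarchical approach is the more feasible of the two.
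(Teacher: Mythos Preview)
The statement you are attempting to prove is labelled \emph{Conjecture} in the paper; the authors do not give a proof and explicitly state only that the taut-foliation case (Subsection~\ref{ss:taut}) and the case of negative torus knots (Proposition~\ref{prop:T(p,-q)}) confirm it. There is therefore no ``paper's own proof'' to compare your proposal against.

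Your set-up is exactly the one the authors have in mind: you correctly define $\eta$ via the Lutz twist, you correctly compute $d_3(\eta)$ from Proposition~\ref{prop:Hopf} and Equation~\eqref{eq:Alexander}, and you correctly identify $\mathcal M$ as the Legendrian link with complement $(Y(L),\overline{\xi_m})=\overline{\xi_\infty}\cup\zeta^-_{\infty,m}$, just as in Theorem~\ref{thm:plus}. You also correctly isolate the missing step: tightness of $\overline{\xi_m}$ for large $m$ when only $\Leghat(\mathcal L)\neq[0]$ is assumed.

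The genuine gap is in your attempt to close that step. The assertion that ``Honda's classification guarantees that such an attachment either preserves tightness or forces the appearance of Giroux $2\pi$-torsion'' is not a theorem; there is no such dichotomy for basic slice attachments to an arbitrary tight contact manifold with torus boundary. A bypass attachment can create an overtwisted disk directly, without any incompressible torus (let alone a $2\pi$-torsion layer) appearing, so reducing to ``rule out $2\pi$-torsion'' is not justified. Your two fallback strategies are honest sketches but not arguments: the first (pushing $\Leghat(\mathcal L)$ through gluing maps to derive a contradiction) runs into the same wall as the $\EH$ approach, since the relevant gluing maps can kill the class; the second (upgrading Theorem~\ref{thm:EH(taut)} via a hierarchy) would essentially require proving that non-vanishing $\Leghat$ forces universal tightness of the $\infty$-completion, which is not known. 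In short, your proposal recovers precisely the heuristic the authors already have, together with the obstacle that keeps it a conjecture.
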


On the other hand, if the Legendrian link $\mathcal L$ is a positive stabilization, then the above constructed $\mathcal M$ is loose. Hence, we can ask ourselves about the converse of Conjecture \ref{conj:hat}; namely, whether the vanishing of $\Leghat(\mathcal L)$ (for knots with non-zero $\Leg(\mathcal L)$) implies that $\mathcal M$ is loose.
In support of a positive answer, we notice that both are true for negative torus knots, by comparison of \cite{M.n} and \cite{EMM}.

\begin{proof}[Proof of Proposition \ref{prop:T(p,-q)}]
 In \cite{EMM} it is shown that adding (half) Giroux torsion preserves tightness of a torus knot complement, if and only if it can be presented by a totally 2-inconsistent path. In the surgery presentation \cite[Figure 2]{M.n} 2-inconsistent corresponds to initial surgeries of the two singular fibers not being stabilized fully oppositely, and totally 2-inconsistent means they are both either fully positive or fully negative (see \cite[Definition 2.2]{M.n}). 

 Translating this back to the description of transverse strongly non-loose negative torus knots through large negative surgeries along them in \cite{M.n}, neither of the vertices being fully negative means that $U\cdot\Leg(\mathcal L)$ vanishes (as a single positive stabilization is loose), and both being fully positive means that $\Leg(\mathcal L)$ does not live in $U\cdot\HFK^-$ (since in this case the dual characteristic vector, corresponding to the surgery, is terminal in the full path \cite{M.t}).
\end{proof}

\begin{proof}[Proof of Corollary \ref{cor:T(p,-q)}]
If we write the knot Floer homology of a positive torus knot as 
\[ \HFK^-(T_{p,q})\cong \bigoplus_i \big(\F[U]/(U^{n_i})\big)_{(M_i,A_i)} \text{ where } n_0=\infty,\:n_i\in\N_{\geq1} \text{ for } i\geq1, \]
a generator of the summand $\F[U]/(U^{n_i})$ with $n_i>1$ cannot be represented by a Legendrian invariant of any $T_{p,-q}$ by the proof of Proposition \ref{prop:T(p,-q)}.

But the only torus knots with torsion order (as in \cite{JMZ}) equal to $1$ are $T_{2,-2n-1}$ for $n\geq1$, and for these the Legendrian invariants of the knots $L_{k,l}$ (in the notation of \cite{LOSSz}) present generators for all $U$-torsion elements of $\HFK^-(T_{2,2n+1})$ (as shown in \cite[Example 4.10]{M.n}).
\end{proof}

\section{Non-loose realizations}
\label{section:four}
\subsection{Quasi-positive links}\label{ss:qp}
A particular case when we can apply Theorem \ref{thm:main} is the one of quasi-positive links. We recall that in \cite{Hedden_quasi,Cavallo3} it is shown that for every $n$-component quasi-positive link $L$ in $S^3$ one has $\tau(L)=G_4(L)$, the \emph{big slice genus} of $L$ which is defined as \[G_4(L)=g(\Sigma)+n-k=\dfrac{n-\chi_4(L)}{2}\:,\] where $\Sigma$ is the properly embedded, compact, oriented surface in $D^4$ such that $\partial \Sigma=L$ and $\chi(\Sigma)$ is maximal, i.e. $\chi_4(L):=\max_\Sigma\{\chi(\Sigma)\}$; the integer $k$ is the number of connected components of $\Sigma$. Note that $G_4(L)=0$ if and only if the link $L$ is smoothly slice.
\begin{cor}\label{cor:qp}
 Every quasi-positive $n$-component link in $S^3$ admits a non-loose realization with non-zero $\EH$-invariant in the overtwisted structure $\eta$ such that $d_3(\eta)=\chi_4(L)$.
\end{cor}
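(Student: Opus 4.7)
The plan is to apply Theorem \ref{thm:main} directly to $L\hookrightarrow(S^3,\xi_{\text{std}})$. Since $\xi_{\text{std}}$ is Stein fillable we have $\widehat c(S^3,\xi_{\text{std}})\neq [0]$, and $d_3(\xi_{\text{std}})=0$ by our normalization; moreover, the identification $\tau_{\xi_{\text{std}}}(L)=\tau(L)$ recalled just above Theorem~1.6 is in force. So it is enough to exhibit a Legendrian representative $\mathcal L$ of $L$ in $(S^3,\xi_{\text{std}})$ satisfying
\[\tb(\mathcal L)-\rot(\mathcal L)=2\tau(L)-n,\]
and then the stated $d_3$-value will drop out of $d_3(\xi_{\text{std}})-2\tau(L)+n$ together with the equality $\tau(L)=G_4(L)=(n-\chi_4(L))/2$.

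For the first step, I would start from a quasi-positive braid presentation $\beta$ of $L$. Writing $b(\beta)$ for the braid index and $c(\beta)$ for the number of quasi-positive band generators in the factorization, Rudolph's construction yields a quasi-positive surface $\Sigma$ with $\partial\Sigma=L$ and $\chi(\Sigma)=b(\beta)-c(\beta)$; this surface is maximizing, so $\chi(\Sigma)=\chi_4(L)$. Viewing the braid closure as a transverse link in $(S^3,\xi_{\text{std}})$, its self-linking number equals $e(\beta)-b(\beta)=c(\beta)-b(\beta)=-\chi_4(L)$. The Legendrian-transverse correspondence then produces a Legendrian representative $\mathcal L$ whose positive transverse push-off realizes this transverse braid, so that $\tb(\mathcal L)-\rot(\mathcal L)=\slk=-\chi_4(L)$.

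For the second step, the key input is Hedden's identity $\tau(L)=G_4(L)$ for quasi-positive knots \cite{Hedden_quasi}, extended to links in \cite{Cavallo3}; it gives $2\tau(L)-n=n-\chi_4(L)-n=-\chi_4(L)$. Combined with the previous paragraph, $\mathcal L$ satisfies the hypothesis of Theorem \ref{thm:main}. Applying the theorem yields a non-loose realization $\mathcal M$ of $L$ with non-zero $\EH$-invariant in the overtwisted structure $\eta$ on $S^3$ with
\[d_3(\eta)=d_3(\xi_{\text{std}})-2\tau(L)+n=0-2\tau(L)+n=\chi_4(L).\]

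There is essentially no obstacle beyond ensuring the existence of the Legendrian representative with the prescribed classical invariants; that boils down to the two inputs already singled out above, namely the self-linking number computation for a quasi-positive braid and the identification $\tau=G_4$ on the quasi-positive locus. Once these are in place, Theorem \ref{thm:main} does the rest.
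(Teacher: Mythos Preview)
Your proof is correct and follows essentially the same approach as the paper: both arguments take a quasi-positive braid representative, use the associated Bennequin surface to obtain a Legendrian $\mathcal L$ with $\tb(\mathcal L)-\rot(\mathcal L)=-\chi_4(L)$, invoke the identity $\tau(L)=G_4(L)$ from \cite{Hedden_quasi,Cavallo3} to match this with $2\tau(L)-n$, and then apply Theorem~\ref{thm:main}. The only cosmetic difference is that the paper packages the equality via the chain of inequalities in Equation~\eqref{eq:TB} (so that maximality of the quasi-positive surface falls out rather than being cited up front), whereas you quote Rudolph's maximality result directly; the content is the same.
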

 \begin{proof}
 It is proved in \cite{Hedden_quasi,Cavallo3} that for every Legendrian link $\mathcal L$ in $(S^3,\xi_{\text{std}})$, with link type $L$, we have the following version of the slice Thurston-Bennequin inequality:
 \begin{equation}
  \label{eq:TB}
  \tb(\mathcal L)-\rot(\mathcal L)\leq\tb(\mathcal L)+|\rot(\mathcal L)|\leq2\tau(L)-n\leq2G_4(L)-n=-\chi_4(L)\:.   
 \end{equation}
 If we represent $\mathcal L^T$, the transverse push-off of $\mathcal L$, as the closure of a quasi-positive braid then there is a surface $\Sigma$ as before such that \[-\chi(\Sigma)=\slk(\mathcal L^T)=\tb(\mathcal L)-\rot(\mathcal L)\] which means that all the inequalities in Equation \eqref{eq:TB} are actually equalities, and then \[\tb(\mathcal L)-\rot(\mathcal L)=2\tau(L)-n\:.\] 
 Therefore, we can conclude using our main theorem; moreover, one has \[d_3(\eta)=0-\tb(\mathcal L)+\rot(\mathcal L)=n-2G_4(L)=\chi_4(L)\:.\]
\end{proof}

In particular, with this corollary we confirm the existence of a non-loose realization with non-zero $\EH$-invariant for all the strongly quasi-positive links for which the perturbation of the taut foliation landed in the standard structure.

\subsection{Fibered links}
For strongly quasi-positive fibered links, the contact structure on the link complement from the open book fibration is exactly the one with non-vanishing $\Leg^+$-invariant. Hence, both arguments (relying on the taut foliation and on the sharpness of Thurston-Bennequin inequality) give us the same non-loose realizations in $d_3=-||L||_T$. Except in the case of the unknot, when the boundary parallel torus is compressible, which is covered only by the second argument; in fact, this is the only fibered link which has non-loose Legendrian realizations, but no transversely non-loose one. 
On the other hand, we have noticed that when the link does not satisfy the (classical) Thurston-Bennequin equality, the two arguments prove existence of different non-loose realizations. We illustrate this here in the case of a fibered, quasi-positive knot, which is not strongly quasi-positive.

Let us consider the knot $8_{20}$ in Figure \ref{8_20}: we have that $\tau(8_{20})=g_4(8_{20})=0$, because the knot is also smoothly slice, and its maximal self-linking number $\SL(8_{20})$ is equal to $-1$;
moreover, a computation of knot Floer homology shows that the unique top Alexander grading homology class has bigrading $(M,A)=(2,2)$.
\begin{figure}[ht]
 \centering
 \includegraphics[width=6cm]{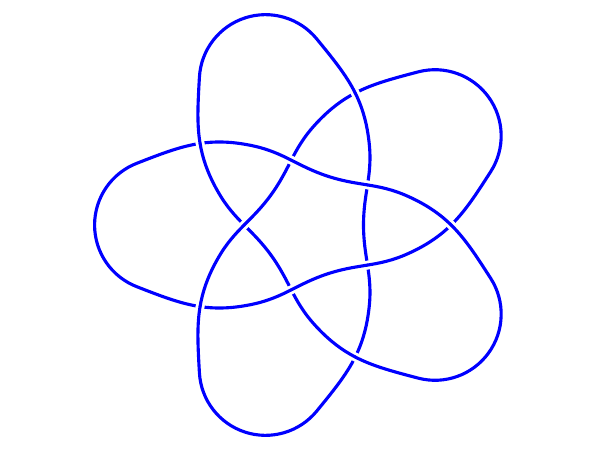}
 \hspace{2cm}
 \includegraphics[width=7cm]{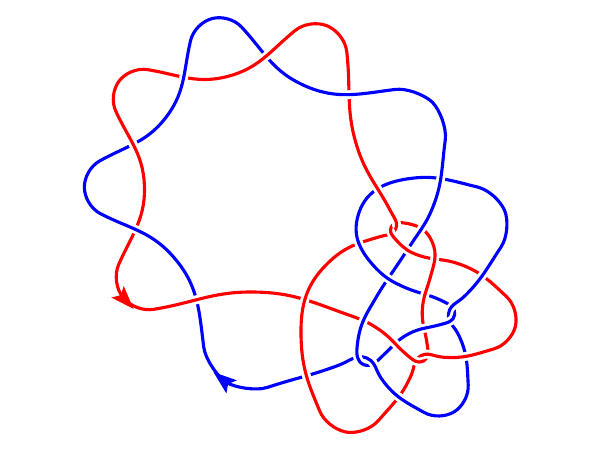}
 \caption{The knot $8_{20}$ (left) and the link $L=(6_1)_{2,-10}$ (right).}
 \label{8_20}
\end{figure}
We know then that there is a non-loose Legendrian knot $\mathcal L_1$ with $\Leghat(\mathcal L_1)\neq 0$ (whose transverse push-off is also non-loose) in $(S^3,\xi)$ where $d_3(\xi)=2A-M=2$. 
On the other hand, according to Theorem \ref{thm:main} we find another non-loose knot $\mathcal L_2$ with (at least) $\EH(\mathcal L_2)\neq 0$ which lives in $(S^3,\eta)$ with $d_3(\eta)=-\SL(8_{20})=1$. This implies that the two realizations are really distinct, as they appear in different contact structures.

\subsection{Links and sublinks}
Corollary \ref{cor:sublink} extends the applicability of Theorem \ref{thm:main} to links which themselves do not satisfy conditions of the theorem, when any of their sublinks does. 
In contrast, we want to show that there are also links that satisfy the hypothesis of Theorem \ref{thm:main}, but none of their sublinks do. 
In order to accomplish this, we consider the 2-component link $L$ defined as the $(2,-10)$-cable of the Stevedore's knot $6_1$, see Figure \ref{8_20}, with oppositely oriented components. 

The components of $L$ are two knots $6_1$: it was proved by Etnyre, Ng and Vértesi in \cite{Etnyre_twist} that $\TB(6_1)=\SL(6_1)=-5$, where $\TB$ refers to the maximal Thurston-Bennequin number, which means \[\slk(\mathcal K)\leq\SL(6_1)<-1=2\tau(6_1)-1\] for every Legendrian knot $\mathcal K$ with knot type $6_1$, and thus Corollary \ref{cor:sublink} is not useful. In addition, the knot $6_1$ is not fibered and the same is true for $L$ itself, since the only fibered links bounding an annulus in $S^3$ are the Hopf links. We make use of the following result of Rudolph from \cite{Rudolph}.
\begin{prop}[Rudolph]
 The $(2,2k)$-cable of a non-trivial knot $K$ in $S^3$ is strongly quasi-positive, and then quasi-positive, if and only if $k\leq\TB(K)$.
\end{prop}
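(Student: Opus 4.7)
The plan is to prove both directions via contact geometry, leveraging Rudolph's characterization of strongly quasi-positive links as exactly those bounding Bennequin surfaces in the standard contact $(S^3,\xi_{\text{std}})$.

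For the direction $k\leq\TB(K)\Rightarrow K_{2,2k}$ strongly quasi-positive, I would first fix a Legendrian representative $\mathcal K$ of $K$ with $\tb(\mathcal K)=k$; this exists because $\TB(K)\geq k$, so we may apply $\TB(K)-k\geq 0$ negative stabilizations to a Legendrian representative attaining the maximal Thurston-Bennequin number. The contact-framed Legendrian pushoff $\mathcal K'$ (along a vector field transverse to the contact planes) is Legendrian isotopic to $\mathcal K$ and satisfies $\lk(\mathcal K,\mathcal K')=\tb(\mathcal K)=k$. Hence $\mathcal K\cup\mathcal K'$ is a Legendrian realization of $K_{2,2k}$, and the thin annulus $A\subset(S^3,\xi_{\text{std}})$ between $\mathcal K$ and $\mathcal K'$, framed by the contact planes, is a Bennequin annulus in the sense of Rudolph. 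Rudolph's criterion then immediately yields that $K_{2,2k}=\partial A$ is strongly quasi-positive.

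For the converse, assume $L:=K_{2,2k}$ is strongly quasi-positive. Strong quasi-positivity forces the slice-Bennequin inequality to be sharp for some transverse representative $\mathcal T$ of $L$, namely $\slk(\mathcal T)=-\chi_4(L)$. I would then determine $-\chi_4(L)$ by noting that $L$ bounds the cable annulus $A\subset\nu K$ of Euler characteristic $0$, and that for $K$ non-trivial this annulus is Thurston-norm minimizing in the link complement; combined with the adjunction inequality (to rule out higher-$\chi$ slice surfaces that would cap off the non-trivial components $K$), this gives $\chi_4(L)=0$, so $\SL(L)=0$. Next, I would express $\SL(K_{2,2k})$ via the cabling procedure: any transverse representative of $L$ is transverse-isotopic to the transverse pushoff of a Legendrian pushoff of a Legendrian knot $\mathcal K$ of type $K$, so its self-linking number is computable as a linear combination of $\tb(\mathcal K)$, $\rot(\mathcal K)$ and the cable parameter $k$. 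Maximizing over $\mathcal K$ and comparing with $\SL(L)=0$ forces the existence of a Legendrian $\mathcal K$ with $\tb(\mathcal K)\geq k$, hence $\TB(K)\geq k$.

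The main obstacle I anticipate is in the converse direction: justifying that $\chi_4(L)$ is indeed realized by the cable annulus (requiring either the $4$-dimensional adjunction inequality or Gabai's sutured manifold arguments applied to the components), and pinning down the precise cabling formula for $\SL(K_{2,2k})$ so that the comparison with $\SL(L)=0$ is sign-correct. The forward direction is by contrast a direct Legendrian construction, reducing essentially to invoking Rudolph's Bennequin-surface characterization of strong quasi-positivity.
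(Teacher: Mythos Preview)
The paper does not give its own proof of this proposition: it is stated as a result of Rudolph and simply cited to \cite{Rudolph}, with no argument supplied. So there is no in-paper proof to compare your proposal against.

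That said, your forward direction is essentially Rudolph's construction and is sound: a Legendrian $\mathcal K$ with $\tb(\mathcal K)=k$ together with its contact pushoff bounds a Bennequin annulus, hence a quasi-positive Seifert surface for $K_{2,2k}$. For the converse, your route through $\chi_4$ and a cabling formula for $\SL$ can be made to work but is more roundabout than necessary, and the two ``obstacles'' you flag are real. A cleaner line, closer to Rudolph's original, avoids both: if $K_{2,2k}$ is strongly quasi-positive then it bounds a quasi-positive Seifert surface, and quasi-positive surfaces are genus-minimizing; since $K$ is non-trivial, the cabling annulus $A$ is the unique minimal-genus Seifert surface, so $A$ itself is quasi-positive. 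But a quasi-positive annulus embeds (incompressibly) in the fiber of a positive torus link, i.e.\ on a page of an open book supporting $\xi_{\text{std}}$; its core then carries a Legendrian representative of $K$ whose page framing equals the annulus framing $k$, giving $\tb=k$ and hence $\TB(K)\geq k$. This bypasses the need to compute $\chi_4(L)$ via adjunction and to pin down an explicit $\SL$-cabling formula.
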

Since in our case $k=-5=\TB(K)$, the link $L$ is strongly quasi-positive; hence, it satisfies the (classical) Thurston-Bennequin equality. But since it is not fibered, we do not a priori know whether taut sutured hierarchy gives us a non-loose realization. However, there exists an $\mathcal L$ with $\Leg^+(\mathcal L)\neq 0$ (in fact, the sutured hierarchy results in one when it does not land in an overtwisted structure) and we succeed in finding a non-loose Legendrian representative of $L$ with non-zero $\EH$-invariant in $d_3=-2\tau(L)+2=0$, thanks to Theorem \ref{thm:main}. Additionally, by Remark \ref{rmk:sub} there are non-loose realizations of $L$ with non-zero $\EH$-invariant also in the contact structure where $\pi$-torsion is applied only to a single component, that is, in $d_3=-\SL(6_1)=5$.

\end{document}